\newtheorem{theorem}{Theorem}[section]
\newtheorem{lemma}[theorem]{Lemma}
\newtheorem{remark}[theorem]{Remark}
\newenvironment{proof}[1][Proof]{\begin{trivlist}
\item[\hskip \labelsep {\bfseries #1}]}{\end{trivlist}}
\newenvironment{definition}[1][Definition]{\begin{trivlist}
\item[\hskip \labelsep {\bfseries #1}]}{\end{trivlist}}
\title{The double obstacle problem on non divergence form}
\author{Luis F. Duque}
\date{\today}
\begin{document}
\maketitle

\section{Abstract}
We study the regularity of the solution of the double obstacle problem form for fully non linear parabolic and elliptic operators. That is, we study a continuous function $u$ such that
\[ 
\left \{
  \begin{matrix}
  \phi_1 \leq u \leq \phi_2 \text{ on } Q_1 \\
  F(D^2 u) -\partial_t u \leq 0 \text{ on } \{ u < \phi_2 \} \cap Q_1 \\
  F(D^2 u) -\partial_t u \geq 0 \text{ on } \{ u > \phi_1 \} \cap Q_1
  \end{matrix}
\right .
\]

where $Q_1:=B_1x[-1,1]$ and $\phi_1$, $\phi_2$ are two uniformly separated obstacles. We show that when the obstacles are sufficiently regular $u$ is $C^{1,\alpha}$ in the interior of $Q_1$.

\section{Introduction}

The literature on elliptic single obstacle problems is vast. The optimal regularity of the solution and a detailed study of the free boundary can be found in \cite{Caffarelli} in the case of the Laplace operator. In \cite{Kinderlehrer}, Kinderlehrer studied the solution of this problem for elliptic operators with variable coefficients. The initial motivation of our work was precisely the generalization of Kinderlehrer's result to more general situations involving two obstacles and fully non linear elliptic and parabolic operators. \\

The regularity of solutions to the elliptic double obstacle problems in divergence form was studied on \cite{DalMaso} for the linear case. Later on, Kilpelainen and Ziemer (see \cite{Ziemer}) studied the Holder continuity of solutions for non-linear elliptic operators also in divergence form. \\

In \cite{Shahgholian} and \cite{Petrosyan}, Petrosyan and Shahgholian studied the regularity of the solution and the free boundary in non-divergence form of the parabolic single obstacle in different scenarios, including operators with constant coefficients and fully non linear-elliptic ones. They also presented the relation between this problems and the study of american options and choose their obstacles accordingly.\\

The main results of our paper are the interior $C^{1,\alpha}$ regularity of the solutions of both the elliptic and parabolic versions of this problem (see Theorem \ref{theorem_interior_regularity_elliptic} and Theorem \ref{C1InteriorRegularityParabolic})

The key step in this proofs is to study the way in which the solution grows away from the obstacle at the contact points (this is, the points in which the solution touches an obstacle). This growth is studied on Lemma \ref{growth_at_contact} and Lemma \ref{modulus_lemma_p} for the elliptic and the parabolic cases respectively. In the Appendix we sketch a proof of existence and regularity of solutions for the elliptic double obstacle problem when the obstacles are smooth, to do so we use a penalisation method as in \cite{Friedman}.

\section{Notation and basic definitions} 

\begin{itemize}
	\item $B_r(x_0)$ denotes a ball in space centered at $x_0$
	\item $Q_R(x_0, t_0):=B_R(x_0) \times [t_0-R^2, t_0+ R^2]$ 
	\item $Q^+_R(x_0, t_0):=B_R(x_0) \times [t_0, t_0+ R^2]$
	\item $Q^-_R(x_0, t_0):=B_R(x_0) \times [t_0-R^2, t_0]$ 
	\item $Q_R:=Q_R(0,0)$
	\item $Q^+_R:=Q^+_R(0,0)$
	\item $Q^-_R:=Q^-_R(0,0)$
	\item $\partial_p Q_R(x_0,t_0):= \partial B_R(x_0) \times(t_0-R^2, t_0 + R^2) \cup B_R(x_0)\times \{t_0-R^2 \}$
	\item $\partial_i u$ denotes the (spatial) derivative of $u$ in the $i$-th direction
	\item $\partial _t u$ denotes the derivative of $u$ the time direction 

\end{itemize}

\begin{definition}
 Let $S$ the space of al the $n\times n$ symmetric matrices. We say that $F:S\rightarrow \mathbb{R}$ is a uniformly elliptic operator if there are two constants $0<\lambda< \Lambda$ such that for every $N, M\in S$ with $N\geq 0$ we have 

\begin{equation} \label{ellipticity_definition}
\lambda||N|| \leq F(M+N)-F(N) \leq \Lambda ||N||
\end{equation}

In this case we denote $\lambda \leq F\leq \Lambda$
\end{definition}

\begin{remark}\label{Fully_non_linear_properties}
 We state some of the rescaling properties of fully non linear operators that will be used throught this paper. Let $F$ a uniformly elliptic operator $\lambda \leq F\leq \Lambda$.

\begin{enumerate}

\item Define $F_c:S\rightarrow \mathbb{R}$ such that $F_c(M):=\frac{1}{c}F(cM)$ for every $M\in S$ then $F_c$ has the same ellipticity of $F$, that is $\lambda \leq F_c\leq \Lambda$

\item Let $u$ such that $F(D^2 u)=0$ on $B_r$ and let $c\in \mathbb{R}^+$, define $\hat{u}:=cu$ on $B_r$ then $\hat{u}$ is the solution of a Fully non linear elliptic equation with the same ellipticity of $F$, that is $F_c(D^2\hat{u})=0$ on $B_r$.

\item Let $u$ such that $F(D^2 u)=0$ on $B_r$ and define $\hat{u}(x):=u(\frac{x}{r})$ on $B_1$ then $\hat{u}$ is the solution of a Fully non linear elliptic equation with the same ellipticity of $F$,  that is $F_{r^2}(D^2\hat{u})=0$ on $B_1$.

\item Let $\lambda \leq F_i\leq \Lambda$ a sequence of elliptic operators and $u_i\rightarrow u_0$ uniformly on $B_1$ such that $F_i(D^2u_i)=0$ on $B_1$ then $F_0(D^2u_0)\leq 0$ on $B_1$ for some $F_0$ uniformly elliptic with $\lambda \leq F_0\leq \Lambda$

\item Let $u$ such that $F(D^2u)-\partial_t u =0$ on $Q_1$, take $0<\lambda<1$ and $A>0$ and define $\hat{u}(x,t):=\frac{1}{A}u(\lambda x, \lambda^2 t)$ then $\hat{u}$ satisfies a fully non linear parabolic equation with the same ellipticity of $F$, that is $F_{\frac{A}{\lambda^2}}(D^2\hat{u})-\partial_t \hat{u} =0$ on $Q_1$

\item Let $\lambda \leq F_i\leq \Lambda$ a sequence of elliptic operators and $u_i\rightarrow u_0$ uniformly on $Q_1$ such that $F_i(D^2u_i)-\partial_t u_i=0$ on $B_1$ then $F_0(D^2u_0)-\partial_t u_0\leq 0$ on $Q_1$ for some $F_0$ uniformly elliptic with $\lambda \leq F_0\leq \Lambda$

\end{enumerate}
\end{remark}

\section{Elliptic Double Obstacle problem} \label{Section_Elliptic_Problem}
\subsection{Statement of the Problem} 

Let $\phi_1, \phi_2$ continuous and uniformly separated functions on $\bar{B_1}$, $ g \in C(\partial B_1)$ compatible with $\phi_1$ and $\phi_2$ (that is $\phi_1 \leq g \leq \phi_2  $ on $\partial B_1$) and an elliptic operator $F$ with $0<\lambda \leq F\leq \Lambda$ . We say that a continous function $u$ is a solution to the elliptic double obstacle problem ($\phi_1, \phi_2$, $F$, $B_1$) if

\[ \label{elliptic_problem}
\left \{
  \begin{matrix}
  \text{ u is continuous on } \overline{B}_1  \\
  \phi_1 \leq u \leq \phi_2 \text{ on } \overline{B}_1 \\
  u=g \text{ on } \partial B_1 \\
  F(D^2 u) \leq 0 \text{ on } \{ u < \phi_2 \} \cap B_1 \\
  F(D^2 u) \geq 0 \text{ on } \{ u > \phi_1 \} \cap B_1
  \end{matrix}
\right .
\]

For such a $u$ we define the following subets of $B_1$: \\

$E_1:=\{u = \phi_1\}$, \: $E_2:=\{u = \phi_2\} $, \: and \:$E:=E_1 \cup E_2$ \: are closeds set, and we call them our contact regions\\

$A_1:= \{ u> \phi_1\}$ , \: $A_2:= \{ u< \phi_2\}$ \: and \: $A:= \{\phi_1 <u< \phi_2\}$ are open sets known as the non contact regions\\

$\Gamma_1:= \partial A_1 $,\: $\Gamma_2:= \partial A_2 $\: and \: $\Gamma:= \Gamma_1 \cup \Gamma_2$. $\Gamma$ is known as the free boundary.

\subsection{Regularity of the solution: Elliptic case}

The following lemma will be a recurrent tool 

\begin{lemma}\label{epsilon_lemma} ($L^\epsilon _w$) Let $u$ non-negative super solution on $B_{r}$, that is $F(D^2 u)\leq 0$ then
 
\begin{equation} 
||u||_{L^\epsilon_w(B_{r/2})} \leq C u(0) r^n
\end{equation} 

or, equivalently

\begin{equation} \label{equation_epsilon_lemma}
|\{u>N\}\cap B_{r/2}| \leq C \frac{u(0) r^n}{N^\epsilon} \text{\:\:\: for any \:\:\:} N>0
\end{equation}

Where \: $C, \epsilon>0$ \: are universal constants 
\end{lemma}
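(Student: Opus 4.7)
This is the classical weak $L^\epsilon$ estimate of Caffarelli for nonnegative supersolutions of uniformly elliptic fully nonlinear equations. The plan is to follow the standard strategy from the Caffarelli--Cabr\'e monograph: combine the Alexandrov--Bakelman--Pucci (ABP) estimate with a Calder\'on--Zygmund dyadic cube decomposition to produce a geometric decay on the distribution function of $u$.

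First I would reduce to a normalized setting. Using the scaling properties (2) and (3) from Remark \ref{Fully_non_linear_properties}, I can rescale $u$ to a nonnegative supersolution $\tilde{u}$ of an operator with the same ellipticity on $B_1$ with $\tilde{u}(0)=1$, so that the target reduces to showing $|\{\tilde u > N\} \cap B_{1/2}| \leq C N^{-\epsilon}$ for a universal $\epsilon >0$ and $C$ (the statement in the lemma then follows after undoing the scaling). The key technical input is a \emph{basic measure estimate}: there exist universal $M>1$ and $\mu\in(0,1)$ such that whenever $v\geq 0$ satisfies $F(D^2 v)\leq 0$ on $B_{2\sqrt{n}}$ and $\inf_{Q_1} v \leq 1$ (with $Q_1$ the unit cube centered at the origin), then
\[
|\{v\leq M\}\cap Q_3| \geq \mu\, |Q_3|.
\]
This is obtained by constructing an explicit radial barrier $\varphi$ that is a strict subsolution in an annulus, applying ABP to $u+\varphi$, and then using the fact that $u$ is a supersolution to extract a measure-density statement about the contact set.

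Next I would iterate this basic estimate via Calder\'on--Zygmund cubes. Consider a dyadic subdivision of a cube containing $B_{1/2}$, and let $A_k=\{\tilde u > M^k\}$. The basic measure estimate, applied at each scale after rescaling, yields that whenever $Q$ is a dyadic cube with $|A_{k+1}\cap Q| > (1-\mu)|Q|$, then the dyadic predecessor $\widetilde{Q}$ satisfies $\widetilde{Q}\subset A_k$. Applying the Calder\'on--Zygmund decomposition lemma to the pair $(A_{k+1}, A_k)$ gives
\[
|A_{k+1}\cap B_{1/2}| \leq (1-\mu)\, |A_k\cap B_{1/2}|.
\]
Since $A_0$ has measure bounded by that of $B_{1/2}$, this produces the geometric decay $|A_k\cap B_{1/2}| \leq C(1-\mu)^k$. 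Choosing $\epsilon>0$ with $(1-\mu)=M^{-\epsilon}$ and interpolating for $N$ between consecutive powers $M^k\leq N<M^{k+1}$ yields the claimed weak--$L^\epsilon$ bound.

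The step that I expect to be the main technical obstacle is the basic measure estimate, since this is where ABP enters and where one must construct the barrier $\varphi$ matching the ellipticity constants $\lambda,\Lambda$; the choice of $M$, $\mu$ and the size of the barrier are the only places where universal constants are actually computed, and everything else in the chain is a formal iteration. The Calder\'on--Zygmund step is a purely geometric/combinatorial covering argument once the basic estimate is in hand, and the final rescaling back to $B_r$ only affects the dependence on $r^n$ stated in the lemma.
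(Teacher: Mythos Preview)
Your outline is correct and is exactly the argument of Lemma 4.5 in Caffarelli--Cabr\'e, which is precisely what the paper invokes: the paper does not give its own proof but simply cites \cite{Cabre}. So your approach coincides with the paper's (by reference), and the details you sketch---ABP plus barrier to get the basic measure estimate, then Calder\'on--Zygmund iteration for geometric decay, then rescaling---are the standard ones from that source.
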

\begin{proof}
 Lemma 4.5. on \cite{Cabre} $\square$.
\end{proof}

Notice that if a non-negative function $u$ satisfies a mean value property (for instance when $F=\Delta$) Lemma \ref{epsilon_lemma} follows immediately (with $\epsilon = 1$) since

\begin{equation}
u(0)=\frac{1}{|B_{r/2}|}\int_{B_{r/2}}u \geq \frac{1}{|B_{r/2}|}\int_{B_{r/2}\cap\{ u \geq N\}}u \geq \frac{N|B_{r/2}\cap\{ u \geq N\}|}{|B_{r/2}|}  
\end{equation}

\begin{remark}\label{maxminremark}
If $u$ is a viscosity solution of the double obstacle problem ($\phi_1, \phi_2$, $F$, $B_1$) and $\gamma$ is a constant such that $\phi_1<\gamma<\phi_2$ then $w:=\max(u, \gamma)$ is a subsolution of $F$, that is $F(D^2w)\geq 0$ on $B_1$. Similarly, $\min(u, \gamma)$ is a supersolution of $F$
\end{remark}

\begin{lemma} \label{growth_at_contact} (growth near contact points) Let $u$ a solution to the double obstacle problem $(\phi_1, \phi_2, F, B_1)$, and let $\sigma(r)$ the modulus of continuity of the obstacles. Suppose moreover that $x_0\in E_1$ (i.e. $u(x_0)=\phi_1(x_0)$) then $u \leq \phi_1(x_0) + C\sigma(r)$ on $B_{r/16}(x_0)$, where $C>0$ is a universal constant. 
\end{lemma}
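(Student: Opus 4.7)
The plan is to argue by contradiction and compactness, using Remark~\ref{maxminremark} for the super/sub-solution structure and the strong minimum principle on a rescaled limit. Suppose the conclusion fails for a given solution $u$ and obstacles $\phi_1,\phi_2$: there are contact points $x_k\in E_1$ and radii $r_k>0$ with
\[
M_k := \sup_{B_{r_k/16}(x_k)}\bigl(u-\phi_1(x_k)\bigr) \geq k\,\sigma(r_k).
\]
Rescale by $\tilde u_k(y):=(u(x_k+(r_k/16)y)-\phi_1(x_k))/M_k$ on $B_1$, so that $\tilde u_k(0)=0$ and $\sup_{\bar B_1}\tilde u_k=1$ (attained). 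The rescaled lower obstacle satisfies $|\tilde\phi_1^k|\leq\sigma(r_k)/M_k\leq 1/k\to 0$. Since $M_k\leq\|\phi_2-\phi_1\|_\infty+\sigma(r_k)$ is bounded and $M_k\geq k\sigma(r_k)$, we have $\sigma(r_k)\to 0$ and hence $r_k\to 0$. Along a subsequence $M_k\to M_0\in[0,K]$ and $x_k\to x_\infty$; by continuity of $\phi_2$ the rescaled upper obstacle $\tilde\phi_2^k$ converges uniformly on $B_1$ to the constant $\beta:=(\phi_2(x_\infty)-\phi_1(x_\infty))/M_0\in(0,\infty]$ (with $\beta=\infty$ if $M_0=0$).

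A uniform interior $C^\alpha$-estimate for $\tilde u_k$ up to $\bar B_1$---obtained by combining Lemma~\ref{epsilon_lemma} with the super/sub-modifications from Remark~\ref{maxminremark} in a standard Krylov-Safonov oscillation-reduction argument on a slightly larger ball---together with the stability of viscosity solutions from Remark~\ref{Fully_non_linear_properties}(4), yields a subsequential uniform limit $\tilde u_\infty\in C(\bar B_1)$ solving a double obstacle problem on $B_1$ with constant obstacles $0$ and $\beta$ and a limit operator $F_\infty$ of the same ellipticity. Then $\tilde u_\infty\geq 0$, $\tilde u_\infty(0)=0$, $\tilde u_\infty$ is an $F_\infty$-supersolution on $\{\tilde u_\infty<\beta\}$, and there is $y_\infty\in\bar B_1$ with $\tilde u_\infty(y_\infty)=1$. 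Since $\beta>0$, continuity puts $0$ in the interior of $\{\tilde u_\infty<\beta\}$; the strong minimum principle for non-negative viscosity supersolutions of uniformly elliptic equations then forces $\tilde u_\infty\equiv 0$ on the connected component of $\{\tilde u_\infty<\beta\}$ containing $0$. Continuity of $\tilde u_\infty$ and $\beta>0$ prevent this component from sharing a boundary with $\{\tilde u_\infty=\beta\}$ inside $B_1$, so the component must be all of $B_1$, giving $\tilde u_\infty\equiv 0$ on $\bar B_1$ and contradicting $\tilde u_\infty(y_\infty)=1$.

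\textbf{Main obstacle.} The key technical input is the uniform $C^\alpha$-compactness of the rescaled sequence, i.e.\ a H\"older estimate for solutions of the double obstacle problem depending only on $n,\lambda,\Lambda$ and the uniform modulus $\sigma$ of the obstacles. This is obtained by applying Lemma~\ref{epsilon_lemma} to the super/sub-modifications $\min(u,\gamma)$ and $\max(u,\gamma)$ from Remark~\ref{maxminremark} within a Krylov-Safonov oscillation-reduction scheme; the stability of the operator under the rescaling, furnished by Remark~\ref{Fully_non_linear_properties}(4), ensures the limit equation remains uniformly elliptic. An alternative closing---bypassing compactness---is a direct $L^\epsilon$+subsolution dichotomy iterated across dyadic scales, but extracting the sharp linear dependence on $\sigma(r)$ from such an iteration (rather than a weaker $\sigma(r)^\theta$ with $\theta<1$) is delicate and the compactness route seems cleaner.
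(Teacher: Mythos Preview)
Your compactness route has a gap precisely at the step you flag as the ``main obstacle'': the uniform $C^\alpha$ estimate for the rescaled sequence $\tilde u_k$. You propose to obtain it by running a Krylov--Safonov oscillation reduction using the truncations from Remark~\ref{maxminremark}, but that remark only produces a supersolution $\min(u,\gamma)$ and a \emph{separate} subsolution $\max(u,\gamma)$ for constants $\gamma$ strictly between the obstacles; it does not hand you a single function in $S(\lambda,\Lambda)$, which is what the standard iteration consumes. Turning those two one-sided objects into an oscillation decay at contact points is exactly the content of the present lemma (indeed, Theorem~\ref{Holder_Regularity_u} is deduced \emph{from} this lemma via Lemma~\ref{auxiliar_holder_growth_1}), so invoking such an estimate here is circular. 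There is also a secondary issue in the sub-case $M_0=0$: then $\tilde\phi_2^k\to+\infty$ and you have no a priori $L^\infty$ bound for $\tilde u_k$ on any ball strictly larger than $B_1$, so even the first step of an interior estimate on a ``slightly larger ball'' is unavailable.

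The paper avoids compactness altogether and argues directly at scale $r$. After normalizing so that $u\ge 0$ on $B_r$ and $u(0)=\sigma(r)$, it splits into two cases. If $u$ does not touch $\phi_2$ in $B_r$, then $u$ is a genuine supersolution there; Lemma~\ref{epsilon_lemma} gives $\|u\|_{L^\epsilon_w(B_{r/2})}\le C\sigma(r)r^n$, and then $\overline u=\max(u,2\sigma(r))$ is a subsolution with the same $L^\epsilon$ control, so the local maximum principle (Lemma~4.4 in \cite{Cabre}) yields $u\le C\sigma(r)$ pointwise. If $u$ does touch $\phi_2$ at some $x_1\in B_{r/4}$ with $u(x_1)=M\sigma(r)$, one applies Lemma~\ref{epsilon_lemma} once to a truncation of $u$ (centered at $0$) and once to a truncation of $(M{+}1)\sigma(r)-u$ (centered at $x_1$); the resulting upper bounds on $|\{u\ge \tfrac{M}{2}\sigma(r)\}\cap B_{r/4}(x_1)|$ and $|\{u<\tfrac{M}{2}\sigma(r)\}\cap B_{r/4}(x_1)|$ sum to less than $|B_{r/4}|$ once $M$ is large, a contradiction. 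This is a single-scale argument with no iteration and no limit, so it delivers the sharp bound $C\sigma(r)$ immediately---your concern that the ``direct $L^\epsilon$ + subsolution dichotomy'' would only produce $\sigma(r)^\theta$ is unfounded, and that route is in fact the correct one.
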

\begin{proof}

Without loss of generality, suppose that $x_0=0$,  $u(0)=\phi_1(0)=\sigma(r)$ and $u \geq 0$ on $B_r(x_0)$ (if this is not the case, we could translate and study instead $\overline{u}:= u-(\phi_1(0) - \sigma(r))\geq 0$ on $B_r(x_0)$ instead of $u$). We consider the following cases: \\

\textbf{Case 1:} If $u$ touches the upper obstacle at $x_1 \in B_{r/4}$, we claim that $u(x_1)\leq  M\sigma(r)$ for some universal constant $M$. This implies that $u \leq (M+1)\sigma(r)$ on $B_{r/2}$. Suppose by contradiction that $u(x_1) = M\sigma(r)$ for some universal constant $M>0$ very big (to be chosen), we have then that $u_1:=(M+1) \sigma(r) - u\geq 0$ on $B_{r}(x_1)$, also as $u_1(x_1)\leq\sigma(r)$ we can apply Lemma \ref{epsilon_lemma} to $\min(u_1, M\sigma(r))$ (see Remark \ref{maxminremark}), and as $B_{r/4}(x_1) \subset B_{r/2}(x_1)$ we get

\begin{equation}\label{epsilon1}
\begin{split}
|\{  u < \frac{M\sigma(r)}{2} \}  \cap B_{r/4}(x_1)| & \leq \frac{C r^n \sigma(r)}{\left((M+1)\sigma(r)-\frac{M \sigma(r)}{2}\right)^\epsilon} \\
& \leq \frac{C r^n \sigma(r)^{(1-\epsilon)}}{\left(\frac{M}{2}\right)^\epsilon} \leq
\frac{Cr^n}{M^{\epsilon}} 
\end{split}
\end{equation}

Also, as we can apply lemma \ref{epsilon_lemma} to $\min(u, M \sigma(r))$ on $B_{r}$ (see Remark \ref{maxminremark}), and as $B_{r/4}(x_1) \subset B_{r/2}$ we get

\begin{equation}\label{epsilon2}
|\{ u \geq \frac{M\sigma(r)}{2} \}  \cap B_{r/4}(x_1)| \leq \frac{C r^n \sigma(r)}{\left( \frac{M\sigma(r)}{2} \right)^\epsilon} \leq \frac{Cr^n}{M^{\epsilon}} 
\end{equation}

where $C$, $\epsilon>0$ are universal constants. And hence, from equations \ref{epsilon1} and \ref{epsilon2}, we can pick  a universal $M>0$, not depending on $r$ so that 

$$|\{  u > \frac{M\sigma(r)}{2} \}  \cap B_{r/4}(x_1)| + |\{ u \leq \frac{M\sigma(r)}{2} \}  \cap B_{r/4}(x_1)| < |B_{r/4}(x_1)|$$

which is a contradiction, and we are done with Case 1. \\

\textbf{Case 2:} If $u$ does not touch the upper obstacle in $B_{r}(x_0)$, we claim that $u(x)\leq  M_0\sigma(r)$ on $B_{r/4}$ for some universal constant $M_0>0$.
  
From lemma \ref{epsilon_lemma}, as we are not touching the upper obstacle we have  
\begin{equation} \label{tempequ1}
||u||_{L^\epsilon_w(B_{r/2})} \leq C \sigma(r) r^n
\end{equation}

Let $\overline{u}:= \max(u,2\sigma(r))$, we know thenn $F(D^2 \overline{u}) \geq 0$ on $B_{r/4}$ (as $u$ is not touching the upper obstacle in this region) and from the previous equation we get that $||\overline{u}||_{L^\epsilon_w(B_{r/4})} \leq C \sigma(r) r^n$. That is, we know $\overline{u}$ is a subsolution of $F$ that is also on $L^{\epsilon}$ so Lemma 4.4. on \cite{Cabre} gives us that $u$ is bounded on the interior, moreover, $u\leq C\sigma(r)$ as desired $\square$
\end{proof}

Now that we have a growth estimate of our solution on the contact points (Lemma \ref{growth_at_contact}), the regularity of $u$ in the interior of $B_1$ will follow once we adapt some results from \cite{Kinderlehrer} to our situation, the rest of the section focuses on doing this.

\begin{lemma}\label{auxiliar_holder_growth_0} Let $U\subset\mathbb{R}^n$ open and bounded, $\rho_0>0$ a constant and $h:\bar{U}\rightarrow \mathbb{R}$ continuous with $F(D^2 h)=0$ on $U$ such that 
	$$ \sup_{B_\rho(x_0)\cap U} |h(x)-h(x_0)|\leq \sigma(\rho) \text{\:\:\: for \:\:\:} x_0\in \partial U  \text{\:\:\: and \:\:\:}  0<\rho\leq \rho_0 $$

then \: $|h(x)-h(x')|\leq \sigma(|x-x'|)$ \: for any \: $x, x'\in U$ \: with \: $|x-x'|\leq \rho_0$. 
\end{lemma}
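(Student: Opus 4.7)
The plan is to compare $h$ to its own translate and bound the difference by the maximum principle. Set $z := x' - x$ with $|z| \leq \rho_0$, define the open set $\Omega := U \cap (U - z)$, and on $\Omega$ consider
\[
w(y) := h(y) - h(y+z).
\]
Since $F$ is translation invariant, both $h(y)$ and $h(y+z)$ satisfy $F(D^2 \cdot) = 0$ on $\Omega$. By uniform ellipticity, the difference $w$ satisfies the Pucci extremal inequalities $\mathcal{M}^-_{\lambda,\Lambda}(D^2 w) \leq 0 \leq \mathcal{M}^+_{\lambda,\Lambda}(D^2 w)$ in the viscosity sense on $\Omega$. Therefore $w$ and $-w$ both satisfy the ABP/maximum principle on every connected component of $\Omega$, so $\sup_{\bar\Omega}|w|$ is attained on $\partial \Omega$.

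The next step is to estimate $|w|$ on $\partial \Omega$. Any $y \in \partial \Omega$ lies in $\partial U \cup (\partial U - z)$, and since $y$ is approached from within $\Omega$, both $y$ and $y+z$ lie in $\bar U$, so $w$ extends continuously to $\partial \Omega$. If $y \in \partial U$, then $y+z \in \bar U \cap B_{|z|}(y)$ and the boundary hypothesis applied with $x_0 = y$ and $\rho = |z| \leq \rho_0$ gives $|h(y+z) - h(y)| \leq \sigma(|z|)$. Symmetrically, if $y+z \in \partial U$, the hypothesis applied with $x_0 = y+z$ and $\rho = |z|$ yields the same bound. Either way, $|w(y)| \leq \sigma(|z|)$ on $\partial \Omega$.

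Finally, because $x \in U$ and $x + z = x' \in U$, the point $x$ lies in $\Omega$, so
\[
|h(x) - h(x')| = |w(x)| \leq \sup_{\partial \Omega} |w| \leq \sigma(|x - x'|),
\]
which is the desired conclusion.

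The main technical point is justifying that the difference of two viscosity solutions of the fully nonlinear equation $F(D^2 \cdot) = 0$ satisfies the Pucci extremal inequalities. This is standard but not completely trivial; it uses the definition of ellipticity \eqref{ellipticity_definition} together with a sup/inf convolution argument to produce test functions for $w$. Once that ingredient is in place, the proof reduces to the clean two-sided maximum principle sketched above.
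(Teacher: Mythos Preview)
Your proof is correct and follows essentially the same approach as the paper: translate $h$ by a fixed vector, observe that the difference satisfies the Pucci extremal inequalities on the intersection of $U$ with its translate, and apply the comparison principle together with the boundary oscillation hypothesis. Your boundary analysis is slightly more explicit than the paper's, but the argument is the same.
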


\begin{proof}
Let $e\in\partial B_1$, for $0<\rho\leq \rho_0$ consider the function 

$$h_\rho(x):= h(x+e\rho) \text{\:\: on  \:\:} U_\rho:= \{ x-e\rho \:|\: x\in U\}$$

Notice that $F(D^2 h_\rho)=F(D^2 h)=0 \text{\:\: on  \:\:} U_\rho \cap U$ and hence

\begin{equation} \label{Pucci_Equation}
 M^-(D^2(h_\rho-h)) \leq F(D^2 h_\rho) - F(D^2 h) \leq M^+(D^2(h_\rho-h))  \text{\:\:\: on \:\:\:} U\cap U_\rho
\end{equation}

Where $M^-, M^+$ are the Pucci Operators with the same ellipticity of $F$ (see Chapter 2.2. on \cite{Cabre}), and hence from the comparison principle we get

$$\sup_{U\cap U_\rho} |h_\rho-h| \leq \sup_{\partial(U\cap U_\rho)} |h_\rho-h|\leq \sigma(\rho)$$ 

and the lemma follows $\square$.
\end{proof}

\begin{lemma}\label{auxiliar_holder_growth_1} Let $U\subset\mathbb{R}^n$ open and bounded, $\rho_0$ a positive constant and $h:\overline{B_1 \cap U}\rightarrow \mathbb{R}$ continuous with \:\: $F(D^2 h)=0$ \: on \: $B_1 \cap U$ such that

\begin{equation}\label{equation_growth_fb_1}
\sup_{B_\rho(x_0)\cap U} |h(x)-h(x_0)|\leq A\rho^\alpha \text{\:\:\: for \:\:\:} x_0\in B_1\cap\partial U  \text{\:\:\: and \:\:\:} 0<\rho\leq \rho_0
\end{equation}

then for $x,x'\in  B_{1-\delta}\cap\partial U \text{\:\:with\:\:} |x-x'|\leq \rho_0 $ we have

$$|h(x)-h(x')|\leq C\sigma(|x-x'|)+ c \frac{||h||_{L^\infty(B_1 \cap U)}}{\delta}|x-x'| $$

\end{lemma}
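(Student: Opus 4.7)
The plan is to adapt the translation trick of Lemma \ref{auxiliar_holder_growth_0} to the localized setting of $B_1\cap U$. The new feature is that $\partial(B_1\cap U)$ splits into a $\partial U$-piece (where the hypothesis \eqref{equation_growth_fb_1} yields the H\"older bound with $\sigma(\rho)=A\rho^\alpha$) and a $\partial B_1$-piece (where only $\|h\|_{L^\infty(B_1\cap U)}$ is available); the restriction to $B_{1-\delta}$ provides a buffer of width $\delta$ from the latter, which we exploit via a Pucci barrier.

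I would first dispose of the trivial regime $r:=|x-x'|\geq\delta/4$: there $|h(x)-h(x')|\leq 2\|h\|_{L^\infty(B_1\cap U)}\leq (8\|h\|_{L^\infty}/\delta)|x-x'|$ already implies the conclusion through the linear term alone. In the main regime $\rho:=r<\delta/4$, set $e:=(x'-x)/\rho$, $h_\rho(y):=h(y+e\rho)$, and $w:=h_\rho-h$ on $V:=(B_1\cap U)\cap((B_1\cap U)-e\rho)$. By \eqref{Pucci_Equation}, $M^-(D^2 w)\leq 0\leq M^+(D^2 w)$ on $V$. On the portion of $\partial V$ with $y\in\partial U\cap B_1$ or $y+e\rho\in\partial U\cap B_1$, the hypothesis gives $|w(y)|\leq A\rho^\alpha$; on the remaining ``bad'' portion, where $|y|=1$ or $|y+e\rho|=1$, we only know $|w(y)|\leq 2\|h\|_{L^\infty(B_1\cap U)}$. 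A triangle-inequality check shows this bad portion lies inside the thin spherical shell $\{\,1-\rho\leq|y|\leq 1\,\}\subset\{\,|y|>1-\delta/4\,\}$.

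The heart of the proof is then to build a Pucci barrier of the form $\psi(y):=A\rho^\alpha+2\|h\|_{L^\infty}\,\tilde b(y)$, where $\tilde b:\overline{B_1}\to[0,1]$ vanishes on $\overline{B_{1-\delta/2}}$, equals $1$ on $\overline{B_1}\setminus B_{1-\delta/4}$, and on the annulus $\{1-\delta/2<|y|<1-\delta/4\}$ is the radial $M^+$-supersolution
\[
\tilde b(r)=\frac{(1-\delta/2)^{-\gamma}-r^{-\gamma}}{(1-\delta/2)^{-\gamma}-(1-\delta/4)^{-\gamma}},
\]
with $\gamma$ chosen large (depending only on $\lambda,\Lambda,n$) so that the standard radial Pucci computation gives $M^+(D^2\tilde b)\leq 0$ on the annulus. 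Gluing by the constants $0$ and $1$ preserves the viscosity supersolution property across the interfaces because $\tilde b$ has an outward jump in gradient at each of them and no smooth paraboloid can touch it from above. By construction $\psi\geq|w|$ on $\partial V$ and $M^+(D^2\psi)\leq 0\leq M^+(D^2 w)$ on $V$, so the Pucci comparison principle (see Chapter~2 of \cite{Cabre}) yields $|w|\leq\psi$ on $\overline V$. Evaluating at $y=x\in B_{1-\delta}\subset B_{1-\delta/2}$, where $\tilde b(x)=0$, gives $|h(x)-h(x')|=|w(x)|\leq A\rho^\alpha= C\sigma(|x-x'|)$. Combined with the trivial estimate in the large-$r$ regime, this yields the claimed bound.

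The main obstacle is the barrier construction: the radial Pucci computation for $r^{-\gamma}$ is routine and fixes the admissible range of $\gamma$, but one must then carefully verify that extending $\tilde b$ by the constants $0$ and $1$ outside the annulus preserves the viscosity supersolution property at both gluing spheres, so that the comparison principle can be invoked cleanly on all of $V$.
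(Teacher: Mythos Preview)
Your strategy---run the translation trick of Lemma~\ref{auxiliar_holder_growth_0} directly on $V=(B_1\cap U)\cap((B_1\cap U)-e\rho)$ and absorb the ``bad'' $\partial B_1$-contribution with a radial Pucci barrier---is different from the paper's, which instead restricts to $U\cap B_{1-\delta}$ and controls the oscillation on the artificial inner boundary $\partial B_{1-\delta}$ by \emph{interior regularity}: the $C^{1,\alpha}$ gradient bound when $\partial U$ is far, and rescaled interior $C^\alpha$ estimates combined with the hypothesis when $\partial U$ is near. Only then does it invoke the translation lemma on the smaller domain.

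The barrier route has a genuine gap at the \emph{inner} gluing sphere $\{|y|=1-\delta/2\}$. There your $\tilde b$ is the \emph{maximum} of the two supersolutions $0$ and the annulus formula (which is negative just inside), and the maximum of supersolutions need not be a supersolution. Concretely, for any $y_0$ with $|y_0|=1-\delta/2$ and small $\epsilon>0$, the paraboloid $\varphi_\epsilon(y)=\epsilon(|y|^2-(1-\delta/2)^2)$ touches $\tilde b$ from \emph{below} at $y_0$ (it is $\le 0=\tilde b$ inside, and on the annulus side it is dominated by $\tilde b$ once $2\epsilon(1-\delta/2)<\tilde b'(1-\delta/2)$), yet $M^+(D^2\varphi_\epsilon)=2n\Lambda\epsilon>0$. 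So $\tilde b$ is not a viscosity $M^+$-supersolution there, and you cannot invoke comparison on all of $V$. Your justification also tests in the wrong direction: supersolutions in the sense of \cite{Cabre} are tested by functions touching from below, not from above. The \emph{outer} interface is fine (there $\tilde b$ is a minimum of supersolutions), but the inner one is a convex kink and admits such test functions.

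This is not easily repaired within a purely radial barrier framework: any nonnegative radial $M^+$-supersolution that vanishes on $\overline{B_{1-\delta/2}}$ and rises to $1$ near $\partial B_1$ has the same convex kink, while dropping the nonnegativity means $\psi$ may fall below $A\rho^\alpha$ on the $\partial U$-portion of $\partial V$ inside $B_{1-\delta/2}$. The paper's use of interior estimates on $\partial B_{1-\delta}$ is precisely what circumvents this obstruction.
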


\begin{proof}
We want to apply Lemma \ref{auxiliar_holder_growth_0} to $h$ on $U\cap B_{1-\delta}$, that is, we need to control the growth of $h$ near $\partial (U\cap B_{1-\delta})$. Notice that the growth near $\partial U$  is already controlled by hypothesis (Equation \ref{equation_growth_fb_1}).\\

If $x\in U\cap \partial B_{1-\delta}$ and $d(x,\partial U)>\delta$ we have that $F(D^2 h)=0 \:\: on \:\: B_\delta(x)$ and hence if we reescale the $C^{1,\alpha}$ estimate for fully non linear elliptic equations (Corollary 5.7 on \cite{Cabre}) we get $||\nabla h||_{L^\infty(B_{\delta/2}(x))}\leq \frac{C}{\delta}||h||_{L^\infty(B_{\delta}(x))}$, and hence 

\begin{equation}\label{eq0_t}
|h(y)-h(x)|\leq \frac{C}{\delta}||h||_{L^\infty}|y-x| \text{\:\: for \: } y \in B_{\delta}(x)
\end{equation}

Finally, without loss of generality take $d(y, \partial U)=d(y, \overline{y}) \leq d(x, \partial U)=d(x,\overline{x})$ with $\overline{x}, \overline{y}\in \partial U$ let $d:=d(x,y)$ and $r:= d(x, \partial U)\leq \frac{\rho_0}{2}$. Consider the following two situation \\

First, if $d\leq \frac{r}{2}$ we have $F(D^2 (h-h(\overline{x})))=0 \:\: on \:\: B_r(x)$, so we can rescale the $C^{\alpha}$ regularity result for fully non linear elliptic equations (Proposition 4.10 on \cite{Cabre}) to get 

$$ \frac{|h(x)-h(y)|}{|x-y|^\alpha} \leq \frac{||h-h(\bar{x})||_{L^{\infty}(B_r(x))}}{r^\alpha}\leq \frac{A(2r)^\alpha}{r^\alpha}= 2^\alpha A$$

The second situation is when $d\geq \frac{r}{2}$, we have then
\begin{equation} \label{eq1_t}
\begin{split}
|h(x)-h(y)| & \leq |h(x)-h(\bar{x})|+|h(y)-h(\bar{y})| +|h(\bar{x})-h(\bar{y})| \\
 & \leq Ar^\alpha + A r^\alpha + A|\bar{x}- \bar{y}|^\alpha \\
 & \leq  2Ar^\alpha + A|2r + d|^\alpha \leq C d^\alpha = C|x-y|^\alpha
\end{split}
\end{equation}

for some universal constant $C$. The result follows once we apply Lemma \ref{auxiliar_holder_growth_0} together with Equation \ref{eq0_t} and Equation \ref{eq1_t} $\square$.
\end{proof}

\begin{theorem}\label{Holder_Regularity_u}($C^{\alpha}$ regularity)
 Let $\phi_1, \phi_2 \in C^{\alpha}$ with modulus of continuity $\sigma(r)$ then $u$ has modulus of continuity 
 
$$ \sigma_u(r)=C_1\sigma(cr)+C_2\frac{||\phi_1||_{L^\infty} + ||\phi_2||_{L^\infty}}{\delta} r \text{\:\:\: on \:\:\:} B_\delta:= \{x\in B_1 | d(x, \partial B_1)>\delta  \} $$
 
\end{theorem}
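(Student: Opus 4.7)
The plan is to split the proof into two regimes determined by whether a point lies inside the non-contact region $A$ or on the contact set $E$. Inside $A$ the function $u$ satisfies $F(D^2u)=0$, so once we have an oscillation estimate at each point of the free boundary $\Gamma\subset E$, we can invoke Lemma \ref{auxiliar_holder_growth_1} directly with $h=u$ and $U=A$. On $E$ the solution coincides with an obstacle, so its modulus of continuity is inherited from that of $\phi_1$ or $\phi_2$. Lemma \ref{growth_at_contact} is the bridge between the two regimes: it converts the modulus of continuity of the obstacles into a full oscillation bound at every contact point.

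First, I would upgrade Lemma \ref{growth_at_contact} to a two-sided oscillation bound at every $x_0\in E$. If $x_0\in E_1$, the lemma gives $u(x)\leq \phi_1(x_0)+C\sigma(r)$ on $B_{r/16}(x_0)$, while $u(x)\geq\phi_1(x)\geq\phi_1(x_0)-\sigma(r)$ by the obstacle constraint and the continuity of $\phi_1$. Hence
\[
|u(x)-u(x_0)| \leq C\sigma(r) \quad \text{on } B_{r/16}(x_0).
\]
The symmetric bound for $x_0\in E_2$ follows by applying the dual form of Lemma \ref{growth_at_contact} (or equivalently repeating the argument with the roles of $\phi_1,\phi_2$ swapped, which is legitimate because the problem is invariant under $u\mapsto \phi_1+\phi_2-u$). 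After the harmless renaming $r\mapsto 16\rho$, and using $\sigma(r)=c r^\alpha$ under the $C^\alpha$ assumption on the obstacles, this is exactly the hypothesis \eqref{equation_growth_fb_1} of Lemma \ref{auxiliar_holder_growth_1} at every $x_0\in B_1\cap\partial A\subseteq \Gamma$.

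Second, I would apply Lemma \ref{auxiliar_holder_growth_1} to $h=u$ on $U=A$. On $A$ both one-sided viscosity inequalities of the obstacle problem hold simultaneously, so $F(D^2u)=0$; furthermore $||u||_{L^\infty}\leq ||\phi_1||_{L^\infty}+||\phi_2||_{L^\infty}$ because $\phi_1\leq u\leq\phi_2$. The lemma then yields
\[
|u(x)-u(x')|\leq C_1\sigma(c|x-x'|)+C_2\frac{||\phi_1||_{L^\infty}+||\phi_2||_{L^\infty}}{\delta}|x-x'|
\]
for $x,x'\in \overline{A}\cap B_{1-\delta}$ with $|x-x'|$ small, which is exactly the target modulus $\sigma_u$ on this subset.

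Finally, to extend the estimate to arbitrary $x,x'\in B_\delta = B_{1-\delta}$, I would stitch the remaining configurations. If $x,x'$ lie in the same contact set $E_i$, then $u(x)-u(x')=\phi_i(x)-\phi_i(x')$, which is controlled by $\sigma(|x-x'|)$. In any mixed configuration (one point in $A$ and one in $E$, or one in $E_1$ and the other in $E_2$), the segment from $x$ to $x'$ must hit $\Gamma$ at some point $y\in \overline{A}$; the triangle inequality combined with $|x-y|,|y-x'|\leq|x-x'|$ reduces the estimate to the two cases already handled. The main subtlety I anticipate is not this stitching but verifying that the $A\rho^\alpha$-type hypothesis of Lemma \ref{auxiliar_holder_growth_1} is indeed supplied by the $\sigma$-type bound coming from Lemma \ref{growth_at_contact}, and that the constants $C_1,C_2,c$ depend only on $\lambda,\Lambda,n,\alpha$ rather than on the particular solution $u$.
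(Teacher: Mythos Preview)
Your proposal is correct and follows essentially the same approach as the paper: the paper's proof consists of the single sentence ``This follows from Lemma \ref{growth_at_contact} and Lemma \ref{auxiliar_holder_growth_1},'' and your argument is precisely the natural way to fill in this reference---upgrading Lemma \ref{growth_at_contact} to a two-sided oscillation bound at contact points so that the hypothesis \eqref{equation_growth_fb_1} of Lemma \ref{auxiliar_holder_growth_1} holds with $h=u$ and $U=A$, and then reading off the conclusion. The stitching step you add at the end is a bit more careful than what the paper spells out, but it is routine and does not deviate from the intended strategy.
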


\begin{proof}
This follows from Lemma \ref{growth_at_contact} and Lemma \ref{auxiliar_holder_growth_1}
$\square$.
\end{proof}

We now want to study the modulus of continuity of the first derivatives of the solution $u$ provided that we know the modulus of continuity of the derivatives of the obstacles.

\begin{remark} \label{growth_from_tangent}
Let $\phi : B_1\rightarrow \mathbb{R}$. It can be shown using the fundamental theorem of calculus that  $\partial_e \phi$ has modulus of continity $\sigma(r)$ on every direction $e$ if and only if $\phi$ separates from its tangent plane in a $r\sigma(r)$ fashion, that is
$$ |\phi(B)-\phi(A) - (B-A) \cdot \nabla \phi(A)| \leq  |B-A|\sigma(|B-A|) \text{\:\: for any \:\:} A,B \in B_1$$
\end{remark}

\begin{lemma} \label{lemma_grow_away_obstacle_derivatives}
Let $u$ a solution to the elliptic double obstacle problem $(\phi_1, \phi_2, F, B_1)$ . Suppose that $\phi_1, \phi_2 \in C^{1,\alpha}(B_1)$, and  $\partial_e \phi_1, \partial_e \phi_2$ have modulus of continuity $\sigma(r)$ on every direction $e$. Then for every $x_0$ in the lower contact set $E_1\cap B_{1/2}$ we have
\begin{equation} \label{modulus_derivative_1_temporal}
\sup_{B_{r/2}(x_0)} (u-\phi_1)\leq Cr\sigma(r) \text{\:\: for \:\:} 0<r<r_0
\end{equation}

Similarly, if $x_0\in E_2$ we have 
\begin{equation}\label{modulus_derivative_2_temporal}
\sup_{B_{r/2}(x_0)} (\phi_2-u)\leq Cr\sigma(r) \text{\:\: for \:\:} 0<r<r_0
\end{equation}

Where $r_0>0$ is a universal constant.

\end{lemma}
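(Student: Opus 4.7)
The strategy is to subtract the tangent plane of $\phi_1$ at $x_0$ and reduce to an application of Lemma \ref{growth_at_contact} on the resulting shifted problem. The $C^{1,\alpha}$ hypothesis on the obstacle translates, via Remark \ref{growth_from_tangent}, into an $r\sigma(r)$ bound in place of the $\sigma(r)$ bound used there.

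Without loss of generality suppose $x_0 = 0$ and set $T(x) := \phi_1(0) + \nabla \phi_1(0)\cdot x$. Define $\tilde u := u - T$ and $\tilde\phi_i := \phi_i - T$ for $i=1,2$. Since $T$ is affine, $D^2 \tilde u = D^2 u$, so $\tilde u$ solves the double obstacle problem $(\tilde\phi_1,\tilde\phi_2,F,B_1)$, with the same uniform separation $\tilde\phi_2 - \tilde\phi_1 = \phi_2 - \phi_1 \geq \delta_0$, and the origin is in the lower contact set of $\tilde u$ with $\tilde u(0) = 0$. Remark \ref{growth_from_tangent} applied to $\phi_1$ gives $|\tilde\phi_1(x)| \leq |x|\sigma(|x|) \leq r\sigma(r)$ on $B_r$, so in particular $\tilde\phi_1 \geq -r\sigma(r)$ there.

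I would then work with $v := \tilde u + r\sigma(r)$, which is nonnegative on $B_r$ with $v(0) = r\sigma(r)$, and rerun the two-case dichotomy of Lemma \ref{growth_at_contact} verbatim, now with the normalizing scale $r\sigma(r)$ in place of $\sigma(r)$. In Case 1 ($u$ touches $\phi_2$ at some $x_1 \in B_{r/4}$), Lemma \ref{epsilon_lemma} is applied both to $\min(v, Mr\sigma(r))$ and to the companion $(M+1)r\sigma(r) - v$ on balls about $x_1$; choosing $M$ large and universal, the two resulting measure bounds fail to cover $B_{r/4}(x_1)$, a contradiction. In Case 2 ($u < \phi_2$ on $B_r$), $v$ is a nonnegative supersolution on all of $B_r$ with $v(0) = r\sigma(r)$; Lemma \ref{epsilon_lemma} together with the $L^\epsilon$-to-$L^\infty$ estimate for subsolutions (Lemma 4.4 in \cite{Cabre}) applied to $\max(v, 2r\sigma(r))$ (a subsolution by Remark \ref{maxminremark}) yields $v \leq C r\sigma(r)$ on a smaller ball. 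Reversing the substitution, $u - \phi_1 = \tilde u - \tilde\phi_1 \leq v + r\sigma(r) \leq C' r\sigma(r)$, and a rescaling (applying the estimate at a dilated radius) produces the statement on $B_{r/2}(x_0)$ with an adjusted universal constant.

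The main obstacle is bookkeeping rather than real difficulty, since the entire $L^\epsilon$ argument is homogeneous in the normalizing scale and nothing changes structurally when $\sigma(r)$ is replaced by $r\sigma(r)$. The symmetric estimate at $x_0 \in E_2$ follows by applying the same reasoning to $-u$, which solves the double obstacle problem with obstacles $-\phi_2, -\phi_1$ and operator $\tilde F(M) := -F(-M)$ of the same ellipticity.
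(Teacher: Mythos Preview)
Your overall strategy---subtracting the tangent plane $T$ of $\phi_1$ at $x_0$ and reducing to the growth argument of Lemma~\ref{growth_at_contact} with $r\sigma(r)$ in place of $\sigma(r)$---is exactly the paper's approach, and your Case~2 goes through as written. The gap is in Case~1.

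When you rerun Case~1 ``verbatim'' you implicitly need the \emph{shifted upper obstacle} $\tilde\phi_2=\phi_2-T$ to have modulus $r\sigma(r)$ near the upper contact point $x_1$: that is what makes the companion function $(M+1)r\sigma(r)-v$ nonnegative on $B_r(x_1)$ and eligible for Lemma~\ref{epsilon_lemma}. But $T$ is the tangent plane of $\phi_1$, not of $\phi_2$, so near $x_1$ one only has
\[
|\tilde\phi_2(y)-\tilde\phi_2(x_1)|\le |\nabla\phi_2(x_1)-\nabla\phi_1(0)|\,|y-x_1|+|y-x_1|\,\sigma(|y-x_1|)=O(r),
\]
not $O(r\sigma(r))$. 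Consequently $(M+1)r\sigma(r)-v$ need not be nonnegative (indeed $\tilde u(x_1)=\tilde\phi_2(x_1)\ge \delta_0-r\sigma(r)$, which for small $r$ is far larger than any fixed multiple of $r\sigma(r)$), and the measure-covering contradiction does not close as stated.

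The paper sidesteps this by observing that Case~1 simply cannot occur for $r$ below a universal $r_0$: since $u$ is already known to be $C^\alpha$ in the interior (Theorem~\ref{Holder_Regularity_u}) and $\phi_2-\phi_1\ge\epsilon_0>0$, the contact sets $E_1$ and $E_2$ are uniformly separated, say $d(E_1,E_2)\ge 4r_0$. Hence on $B_{r_0}(x_0)$ the solution $\tilde u$ never touches $\tilde\phi_2$, the problem is effectively a single-obstacle one, and only your (correct) Case~2 argument is needed. Inserting this one sentence---invoking the $C^\alpha$ regularity to rule out Case~1---fixes your proof and makes it coincide with the paper's.
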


\begin{proof}
Suppose, without loss of generality that $0\in E_1$, and let  $L(x):=\phi_1(0) + x \cdot \nabla \phi_1(0)$  (the first order Taylor expansion of $\phi_1$ at $0$). We want to show that
\begin{equation} \label{temp_growth_from_tangent}
U(x):=u(x)-L(x) \leq  C|x|\sigma(|x|) \text{\:\: on \:\:} B_{r_0}(0)
\end{equation}
For a universal constant $C>0$. 

To see this, notice that $U$ solves the elliptic double obstacle problem $(\Phi_1, \Phi_2, F, B_1)$ with $\Phi_1:=\phi_1- L$ and $ \Phi_2:=\phi_2-L$. Moreover, as our obstacles are uniformly separated (i.e. $\phi_2-\phi_1>\epsilon_0>0$), from Theorem \ref{Holder_Regularity_u} we know that our contact sets $E_1$ and $E_2$ are also uniformly separated, say $d(E_1, E_2) \geq 4 r_0$ for some $r_0>0$ and hence $U$ does not touch $\Phi_2$ on $B_{r_0}$. We know then that we are in a single obstacle type of situation on $B_{r_0}$ and hence we can proceed as Case 2 on Lemma \ref{growth_at_contact} (as the obstacle $\Phi_1$ has modulus of continuity $r\sigma(r)$ at $0$) to get Equation \ref{temp_growth_from_tangent}. Equation \ref{modulus_derivative_1_temporal} follows since
$$|u-\phi_1|(x)\leq |u-L|(x)+|\phi_1-L|(x)\leq C|x|\sigma(|x|).$$
Equation \ref{modulus_derivative_2_temporal} follows in an analog way $\square$.
\end{proof}

\begin{lemma}\label{holder_growth_to_interior}
Let $\phi_1, \phi_2 \in C^{1,\alpha}$ and $\sigma$ the modulus of continuity of $\partial_e \phi_1, \partial_e \phi_2$ on any direction $e$. Let $u$ a solution to the double obstacle problem $(\phi_1, \phi_2, F, B_1)$. Then for $x,x'\in  B_{1-\delta}\cap U \text{\:\:with\:\:} |x-x'|\leq r_0 $ we have

$$|u_e(x)-u_e(x')|\leq C\sigma(c|x-x'|)+ c \frac{||u||_{L^\infty}}{\delta^2}|x-x'|$$

\end{lemma}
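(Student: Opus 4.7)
The plan is to adapt the three-case argument in the proof of Lemma \ref{auxiliar_holder_growth_1} from $h$ to $u_e$, using Lemma \ref{lemma_grow_away_obstacle_derivatives} to furnish boundary data for the derivatives of $u$ at the free boundary and the interior $C^{1,\alpha}$ estimate for $F$ to pass from $L^\infty$ bounds on $u$ (or $u-L$) to gradient bounds.

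First I would prove a pointwise gradient estimate at a contact point. Fix $x_0 \in E_1 \cap B_{1/2}$ (the $E_2$ case is symmetric) and let $L(y) := \phi_1(x_0) + (y-x_0)\cdot\nabla\phi_1(x_0)$, so that $w := u - L$ satisfies $F(D^2 w) = 0$ on $A \cap B_{r_0}(x_0)$. Combining Lemma \ref{lemma_grow_away_obstacle_derivatives} with Remark \ref{growth_from_tangent} applied to $\phi_1$ gives $|w(y)| \leq CR\sigma(R)$ on $B_R(x_0)$ for $R \leq r_0$. Then for any $x \in A$ with $|x-x_0|$ small and $d := d(x, \partial A)$, the rescaled interior $C^{1,\alpha}$ estimate for $F$ applied on $B_{d/2}(x) \subset A$ yields
\[ |\nabla u(x) - \nabla \phi_1(x_0)| = |\nabla w(x)| \leq \frac{C}{d}\|w\|_{L^\infty(B_d(x))} \leq C\sigma(c|x-x_0|). \]
In particular, $u$ is differentiable at $x_0$ with $\nabla u(x_0) = \nabla \phi_1(x_0)$, so the estimate extends pointwise to $x = x_0$.

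Second, for $x, x' \in B_{1-\delta}$ with $|x-x'| \leq r_0$, I would split into cases according to the distances $d := d(x,\partial A)$ and $d' := d(x', \partial A)$. When $\min(d, d') \geq 2|x-x'|$ (both points lie deep inside $A$), $u$ solves $F(D^2 u) = 0$ on balls of size $\delta/2$ covering the segment between $x$ and $x'$, and iterating the interior $C^{1,\alpha}$ estimate produces $|u_e(x) - u_e(x')| \leq c\|u\|_{L^\infty}|x-x'|/\delta^2$, which yields the second term of the claimed bound. When both distances are smaller than $2|x-x'|$, I pick nearest free-boundary points $x_0, x_0' \in \partial A$; by the uniform separation of $E_1$ and $E_2$ established via Theorem \ref{Holder_Regularity_u}, after shrinking $r_0$ we may assume both lie in the same $E_i$, and the triangle inequality
\[ |u_e(x)-u_e(x')|\leq |u_e(x) - \partial_e \phi_i(x_0)| + |\partial_e \phi_i(x_0) - \partial_e \phi_i(x_0')| + |\partial_e \phi_i(x_0') - u_e(x')| \]
together with Step 1 and the assumed modulus on $\partial_e\phi_i$ bounds the right side by $C\sigma(c|x-x'|)$.

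The main obstacle is the mixed case, in which one point is deep in $A$ and the other is near the free boundary; I would handle it by moving along the segment from the interior point toward the boundary, transferring from the interior $C^{1,\alpha}$ estimate to the boundary estimate of Step 1 at the transition point where the distance to $\partial A$ becomes comparable to $|x-x'|$. The two contributions agree in order of magnitude at the transition, so the total estimate retains both terms of the stated form. A final verification checks that the constants and the universal rescaling $c$ of the argument of $\sigma$ match the form stated in the lemma.
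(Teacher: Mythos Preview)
Your overall strategy matches the paper's: establish a modulus for $u_e$ at contact points, then propagate to the interior via a case split on distances to the free boundary. However, Step~1 as written has a gap. You subtract the tangent plane $L$ at a fixed $x_0 \in E_1$ and apply the interior gradient estimate on $B_{d}(x)$ with $d = d(x,\partial A)$; this actually yields
\[
|\nabla w(x)| \leq \frac{C}{d}\,\|w\|_{L^\infty(B_d(x))} \leq \frac{C\bigl(|x-x_0|+d\bigr)}{d}\,\sigma\bigl(c(|x-x_0|+d)\bigr),
\]
and the factor $(|x-x_0|+d)/d$ is unbounded when $d \ll |x-x_0|$, so the last inequality in your display does not follow for a general $x_0$. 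The paper avoids this by subtracting the tangent plane at the \emph{nearest} contact point $\hat{x}$ to $x$ (so that $|x-\hat{x}|$ equals the distance to the contact set and the interior estimate is applied on the ball $B_{|x-\hat x|}(x)$), obtaining $|u_e(x)-u_e(\hat{x})|\leq C\sigma(|x-\hat{x}|)$ cleanly, and then closing with $|u_e(\hat{x})-u_e(x_0)|=|\partial_e\phi_1(\hat{x})-\partial_e\phi_1(x_0)|\leq\sigma(|\hat{x}-x_0|)$ and the triangle inequality.

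In your Step~2 you only invoke Step~1 with $x_0$ taken to be the nearest free-boundary point, in which case $|x-x_0|=d$ and the problematic factor equals $2$, so the final conclusion survives. Still, either restrict the claim in Step~1 to this situation or insert the triangle-through-$\hat{x}$ step as the paper does. After that correction your three-case argument is essentially an explicit unpacking of what the paper defers to the translation/maximum-principle device of Lemma~\ref{auxiliar_holder_growth_0}, now applied at the level of $u_e$ (equivalently, to difference quotients of $u$, which lie in the Pucci class on $A$).
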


\begin{proof}

Let $x_0 \in \Gamma_1$, as the obstacles are universaly separated and we already know that $u\in C^\alpha$ we can assume without loss of generality that $u < \phi_2$ on $B_{r_0}(x_0)$, let $x\in A \cap B_{r_0}(x_0)$ and $\hat{x}$ such that $d(x, E_1 \cap B_{r_0}(x_0))=d(\hat{x}, x)$

Notice that 
\begin{equation}\label{equation123}
 |u_e(\hat{x})-u_e(x_0)|  = |\partial_e\phi_1(\hat{x})-\partial_e\phi_1(x_0)| \leq  \sigma(|\hat{x}-x_0|) 
\end{equation}

Let $L$ the Taylor series expansion of $\phi_1$ at $\hat{x}$ then
\begin{equation}\label{equation234}
\begin{split}
 |u_e(x)-u_e(\hat{x})|& =  |(u-L)_e(x)-u_e(\hat{x}) + L_e(x)| =  |(u-L)_e(x)|\leq  \\
& \leq \frac{C||(u-L)||_{L^{\infty}(B_{|x-\hat{x}|}(x))}}{|x-\hat{x}|}  \leq \frac{C|x-\hat{x}|\sigma(|x-\hat{x}|)}{|x-\hat{x}|} = C\sigma(|x-\hat{x}|)   
\end{split}
\end{equation}

The last set of inequalities follows using that $ L_e(x)=\partial_e \phi_1(\hat{x})= u_e(\hat{x})$, interior estimates (since $F(D^2(u-L))=0$ on $B_{|x-\hat{x}|}(x)$) and Lemma \ref{lemma_grow_away_obstacle_derivatives}

From Equation \ref{equation123} and Equation \ref{equation234} it follows immediately that 
$$|u_e(x)-u_e(x_0)|  \leq  |u_e(x)-u_e(\hat{x})| + |u_e(\hat{x})-u_e(x_0)|\leq C\sigma(|x-x_0|)$$

Where $x_0$ is in the lower contact set, $|x-x_0|<r_0$ and $x$ is not on the contact region. We can now proceed exactly as in the proof of Lemma \ref{auxiliar_holder_growth_0} but applying the maximum principle to $u_e$ instead of $u$ to conclude the result $\square$.
\end{proof}

Finally we restate the last Lemma to get

\begin{theorem}\label{theorem_interior_regularity_elliptic} 
($C^{1, \alpha}$ regularity) If $\phi_1, \phi_2$ are $C^{1,\alpha}(B_1)$, and $u$ is a solution of the elliptic double obstacle problem $(\phi_1, \phi_2,F, B_1)$ then $u$ is $C^{1,\alpha}$ in the interior of $B_1$
\begin{proof}
It follows immediately from the previous Lemma $\square$.
\end{proof}

\end{theorem}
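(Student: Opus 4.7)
The plan is to directly unpack Lemma \ref{holder_growth_to_interior} under the $C^{1,\alpha}$ hypothesis on the obstacles. First I would fix an arbitrary $\delta \in (0,1)$ and restrict attention to the interior subdomain $B_{1-\delta}$. Since $\phi_1, \phi_2 \in C^{1,\alpha}$, the common modulus of continuity of $\partial_e \phi_1$ and $\partial_e \phi_2$ satisfies $\sigma(r) \leq C r^\alpha$, and the sandwich $\phi_1 \leq u \leq \phi_2$ gives the uniform bound $||u||_{L^\infty(B_1)} \leq \max(||\phi_1||_{L^\infty}, ||\phi_2||_{L^\infty}) < \infty$.

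Plugging these into Lemma \ref{holder_growth_to_interior}, I obtain for every direction $e$ and every $x, x' \in B_{1-\delta}$ with $|x-x'| \leq r_0$ the bound
\[
|u_e(x) - u_e(x')| \leq C_1 |x-x'|^\alpha + \frac{C_2}{\delta^2} |x-x'|.
\]
Since $0 < \alpha \leq 1$ and $|x-x'| \leq r_0 \leq 1$, one has $|x-x'| \leq r_0^{1-\alpha} |x-x'|^\alpha$, which absorbs the linear term into the Hölder term and yields $|u_e(x) - u_e(x')| \leq C_\delta |x-x'|^\alpha$ for all such pairs. For pairs with $|x-x'| \geq r_0$ the same Hölder estimate holds trivially after enlarging $C_\delta$, provided $u_e$ is known to be uniformly bounded on $B_{1-\delta/2}$. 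Since $\delta \in (0,1)$ was arbitrary, I conclude $u \in C^{1,\alpha}_{loc}(B_1)$, which is the desired interior $C^{1,\alpha}$ regularity.

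The theorem is essentially bookkeeping on top of Lemma \ref{holder_growth_to_interior}; the only ingredient I would spell out is the uniform $L^\infty$ bound on $u_e$ used in the long-distance case. This follows because at interior contact points $u_e = \partial_e \phi_i$ is bounded by the $C^1$ hypothesis on the obstacles, while on the non-contact region the equation $F(D^2 u) = 0$ holds classically and the standard interior gradient estimate (Corollary 5.7 in \cite{Cabre}, already invoked in Lemma \ref{auxiliar_holder_growth_1}) controls $u_e$ in terms of $||u||_{L^\infty}$ and the distance to $\partial B_1$. The main obstacle is really conceptual rather than technical: one must be comfortable identifying $u_e$ across the free boundary $\Gamma$, which is exactly what Lemma \ref{holder_growth_to_interior} already accomplishes via the projection-to-contact-set argument. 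Once that is accepted, the conclusion is immediate.
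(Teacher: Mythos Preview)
Your proposal is correct and follows exactly the route the paper intends: the paper's proof is the single line ``It follows immediately from the previous Lemma,'' and you have simply spelled out that bookkeeping---specializing $\sigma(r)=Cr^\alpha$, absorbing the linear term into the H\"older term for short distances, and handling long distances via a uniform bound on $u_e$. There is no alternative idea here, only a more explicit write-up of what the paper leaves implicit.
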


\section{Parabolic Double Obstacle problem}

\subsection{Statement of the problem}

In this section, let $\phi_1, \phi_2\in C(\overline{Q_1})$ such that $\phi_1 < \phi_2$. In this section our obstacles do not depend on time, this is $\phi_1(x,t)=\phi_1(x)$ and $\phi_2(x,t)=\phi_2(x)$; let $\sigma=\sigma(r)$ the modulus of continuity of both $\phi_1$ and $\phi_2$. Also, $ g \in C(\partial_p Q_1)$ will be the boundary data.\\

We say that $u\in C(\overline{Q_1})$ solves the parabolic double obstacle problem $(\phi_1, \phi_2, F, Q_1)$ if it satisfies the following

\[ 
\left \{
  \begin{matrix}
  \phi_1 \leq u \leq \phi_2 \text{ on } Q_1 \\
  u=g \text{ on } \partial_p Q_1 \\
  F(D^2 u) -\partial_t u \leq 0 \text{ on } \{ u < \phi_2 \} \cap Q_1 \\
  F(D^2 u) -\partial_t u \geq 0 \text{ on } \{ u > \phi_1 \} \cap Q_1
  \end{matrix}
\right .
\]

In the parabolic context, the definitions of $E_1, E_2, E, A_1, A_2, A, \Gamma_1, \Gamma_2, \Gamma$ are the same as in the elliptic problem, except that now we have subsets of $Q_1$ (instead of $B_1$) 

\begin{remark}\label{parabolicmaxminremark}
If $u$ is a viscosity solution of the parabolic double obstacle problem ($\phi_1, \phi_2$, $F$, $Q_1$) and $\gamma$ is a constant such that $\phi_1<\gamma<\phi_2$ then $w:=\max(u, \gamma)$ is a subsolution of $F$, that is $F(D^2w)-\partial_t w\geq 0$ on $B_1$. Similarly, $\min(u, \gamma)$ is a supersolution of $F$
\end{remark}

\subsection{Regularity of the solution: Parabolic case}\

The following is the analog of Lemma \ref{epsilon_lemma} to the parabolic situation

\begin{lemma}\label{epsilon_lemma_p} (parabolic-$L^\epsilon _ w$) Let $w \in C(Q_1)$ a non negative supersolution of the operator $F-\partial_t$  (i.e. $F(D^2 w) - \partial _t w \leq 0$), $0<R<1$ and $-1<t_0<0$ then there exist universal a universal constant $C>0$ such that

$$ ||w||_{L^\epsilon (\widehat{K}_{1})} \leq C w(0,0)$$
	
that is, 
		
\begin{equation}  \label{epsilon_lemma_p_equation} 
|\{w>N\}\cap \widehat{K}_{1}| \leq \left( \frac{C  w(0, 0)}{N} \right)^{\epsilon}  \text{\:\:\: for any \:\:\:} N>0
\end{equation} 	
		
where $\widehat{K}_{1}:= B_R \times (-1, t_0) $

\end{lemma}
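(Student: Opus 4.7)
The plan is to adapt the proof of Lemma \ref{epsilon_lemma} to the parabolic setting, following the parabolic Krylov-Safonov theory. In practice one would cite a standard reference in the fully nonlinear parabolic literature that gives exactly this $L^\epsilon$ / weak Harnack estimate for supersolutions of $F-\partial_t$; nevertheless, I sketch the ingredients one would use to reproduce it.

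The argument has three steps. \emph{(i) Parabolic ABP (Tso's maximum principle).} One controls $\sup w^-$ on a parabolic cylinder by an $L^{n+1}$ integral over the contact set of the concave envelope of $w$, giving a quantitative lower bound on the size of the set where $w$ is not too large relative to $w(0,0)$. \emph{(ii) Initial measure decay via a barrier.} Using a smooth supersolution of the Pucci extremal operator $M^+$ supported in $B_R \times \{t \leq t_0\}$, one shows the existence of universal constants $M_0 > 1$ and $\mu \in (0,1)$ such that whenever $w \geq 0$ is a supersolution with $w(0,0) \leq 1$,
\begin{equation*}
|\{w > M_0\} \cap \widehat{K}_1| \leq \mu \,|\widehat{K}_1|.
\end{equation*}
\emph{(iii) Iteration.} A parabolic Calder\'on-Zygmund decomposition into stacked cylinders of the form $B_r(x)\times(t-r^2,t)$ bootstraps this single-step decay to the geometric estimate $|\{w > M_0^k\} \cap \widehat{K}_1| \leq \mu^k |\widehat{K}_1|$, from which summation of the distribution function yields \eqref{epsilon_lemma_p_equation} with $\epsilon := -\log \mu / \log M_0$.

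The main obstacle will be the correct use of the strict time gap between $t_0$ and $0$. The domain $\widehat{K}_1$ lies entirely in the past of the reference point $(0,0)$, and this separation is essential: supersolutions of $F - \partial_t$ propagate information only backward in time, and without the gap a supersolution can be small at $(0,0)$ while being arbitrarily large just before $t=0$. The barrier in step (ii) must therefore be tailored so that its value at $(0,0)$ controls $w$ on all of $\widehat{K}_1$, and the universal constant $C$ (as well as the exponent $\epsilon$) will in principle depend on $R$ and on $|t_0|$. Once such a barrier is in place, the rest of the argument is routine, carrying over the scheme of Chapter 4 of \cite{Cabre} from balls to stacked parabolic cylinders.
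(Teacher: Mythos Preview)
Your proposal is correct and in fact goes further than the paper itself: the paper's entire proof is the one-line citation ``See Theorem 4.5 on \cite{Imbert}'', deferring to the Imbert--Silvestre lecture notes for the parabolic weak Harnack inequality. The three-step Krylov--Safonov scheme you outline (parabolic ABP, barrier-based measure decay, Calder\'on--Zygmund stacking) is precisely the argument carried out in that reference, so your sketch is consistent with --- and more informative than --- what the paper provides.
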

\begin{proof}
See Theorem 4.5. on \cite{Imbert}. $\square$
\end{proof}

In order to motivate the previous lemma, consider the heat operator in one spatial dimension, that is, let $w$ satistying
\[ 
\left \{
  \begin{matrix}
   w'' -\partial_t w = 0  \text{ \: on \:}  Q_1 \\
  w\geq 0 \text{\: on \:}  Q_1
  \end{matrix}
\right .
\]

From Theorem 3 section 2.3. on \cite{Evans} we know that $w$ satisfies a mean value formula, let $E=E(0,0,1)$ the heat ball centered at $(0,0)$ with radious $1$. Then

$$ w(0,0) = \frac{1}{4 (2)^n} \iint _{E} w(x,t) \frac{|x|^2}{t^2} dxdt          
\geq \frac{1}{4 (2)^n} \iint _{E \cap \{ w\geq N \}} w(x,t) \frac{|x|^2}{t^2} dxdt $$

Notice that for any $A \subset Q_1$ we have $\iint _A|x|^2 dxdt \geq \frac{|A|^3}{4}$, so it follows that

$$ w(0,0)\geq \frac{N}{4 (2)^n} \iint _{E \cap \{ w\geq N \}}|x|^2 dxdt \geq C N |E \cap \{ w\geq N \}|^3 $$

And hence Equation \ref{epsilon_lemma_p_equation} holds in this particular case if we pick  $ \widehat{K}_{1} \subset E$.

\begin{lemma}\label{auxiliar_lemma_1}Let $w: Q^-_1\rightarrow \mathbb{R}$ a non-negative function such that $F(D^2 w)-\partial_t w=0$ and $w(x_0, t_0)>0$ for some $t_0<0$ then $w(0,0)>0$
\end{lemma}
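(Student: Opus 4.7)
The plan is to derive the conclusion by contradiction directly from the parabolic $L^\epsilon$ estimate stated as Lemma \ref{epsilon_lemma_p}. Suppose, toward a contradiction, that $w(0,0) = 0$. Since $F(D^2 w) - \partial_t w = 0$ in particular gives $F(D^2 w) - \partial_t w \leq 0$, the function $w$ is an admissible nonnegative supersolution for Lemma \ref{epsilon_lemma_p}.

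Now I would choose the parameters in Lemma \ref{epsilon_lemma_p} to cover the point $(x_0,t_0)$. Concretely, pick $R$ with $|x_0| < R < 1$ and pick $t_0'\in(t_0, 0)$, and apply Lemma \ref{epsilon_lemma_p} with these parameters so that $\widehat{K}_1 = B_R\times(-1,t_0')$ contains $(x_0,t_0)$. The estimate yields
\[
\|w\|_{L^\epsilon(\widehat{K}_1)} \leq C\, w(0,0) = 0,
\]
so the integral of the nonnegative function $w^\epsilon$ over $\widehat{K}_1$ vanishes.

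Since $w$ is continuous and nonnegative, this forces $w \equiv 0$ on $\widehat{K}_1$. In particular $w(x_0,t_0) = 0$, contradicting the hypothesis that $w(x_0,t_0) > 0$. Therefore $w(0,0) > 0$, as claimed.

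The argument is essentially a one-line application of the parabolic weak Harnack/$L^\epsilon$ inequality, so there is no real obstacle beyond checking the admissible ranges of $R$ and $t_0'$ in Lemma \ref{epsilon_lemma_p}; the only mild subtlety is that the parabolic $L^\epsilon$ estimate only controls $w$ in the past of $(0,0)$, which is exactly where the hypothesis places the positive value, so the geometry of the estimate matches the statement perfectly.
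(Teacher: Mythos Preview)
Your argument is correct and is exactly the approach the paper takes: the paper's own proof is the single line ``This follows by contradiction using Lemma~\ref{epsilon_lemma_p},'' and you have simply spelled out that contradiction by choosing $R$ and $t_0'$ so that $(x_0,t_0)\in\widehat{K}_1$ and concluding $w\equiv 0$ there from $\|w\|_{L^\epsilon(\widehat{K}_1)}\le C\,w(0,0)=0$.
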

\begin{proof}
This follows by contradiction using Lemma \ref{epsilon_lemma_p} $\square$.
\end{proof}

\begin{lemma}\label{modulus_lemma_p} (Holder growth at contact points)Let $\sigma(r)= Ar^{\alpha}$ be the modulus of continuity of $\phi_1$ and $\phi_2$. $u$ solves the problem $(\phi_1,\phi_2,F,Q_2)$ with $\lambda \leq F \leq \Lambda$. Let $X_0\in E\cap Q_{1/2}$ a contact point. Then $u$ grows in a $C^{\beta}(Q_{1/2})$ away from $X_0$. That is, there exist a universal constant A such that
$$ |u(X)-u(X_0)| \leq A r^{\beta} \text{ for } X\in Q_r(X_0) \text{ and } 0<r<1/2 $$

\end{lemma}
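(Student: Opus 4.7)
The plan is to imitate Lemma \ref{growth_at_contact}, using the parabolic $L^\epsilon_w$ estimate (Lemma \ref{epsilon_lemma_p}) in place of the elliptic one. By the $E_1/E_2$ symmetry I assume $X_0 \in E_1$; after translation and the normalization $u \mapsto u - \phi_1(x_0) + \sigma(r)$ I may take $X_0 = (0,0)$, $u(0,0) = \sigma(r)$ and $u \geq 0$ on $Q_r$. The lower bound $u(X) \geq u(X_0) - \sigma(r)$ on $Q_r(X_0)$ is automatic from $u \geq \phi_1$ together with the modulus of $\phi_1$, so the whole content of the lemma is an upper estimate $u \leq u(X_0) + C\sigma(r)$ on a parabolic cylinder of definite size around $X_0$; iterating this one-scale bound at every dyadic $r < 1/2$ then yields the claimed $C^\beta$ growth.

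Following the elliptic proof I split into two cases. In Case 1, $u$ touches the upper obstacle at some $X_1 = (x_1,t_1) \in Q_{r/8}$ and I seek a contradiction from the hypothesis $u(X_1) = M\sigma(r)$ with $M$ large. Setting $u_1 := (M+1)\sigma(r) - u$, the modulus of $\phi_2$ combined with $u \leq \phi_2$ gives $u_1 \geq 0$ on $Q_r(X_1)$ with $u_1(X_1) = \sigma(r)$, and by Remark \ref{parabolicmaxminremark} $\min(u_1, M\sigma(r))$ is a non-negative parabolic supersolution. After rescaling $Q_r(X_1)$ to unit size, Lemma \ref{epsilon_lemma_p} centered at $X_1$ bounds $|\{u < M\sigma(r)/2\} \cap K^-(X_1)|$ by $Cr^{n+2}/M^\epsilon$ on a past parabolic cylinder $K^-(X_1) \subset Q_{r/2}^-(X_1)$. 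Symmetrically, $\min(u, M\sigma(r))$ is a supersolution with value $\sigma(r)$ at the origin, and Lemma \ref{epsilon_lemma_p} centered at $(0,0)$ bounds $|\{u > M\sigma(r)/2\} \cap K^-_0|$ by $Cr^{n+2}/M^\epsilon$ on a past cylinder $K^-_0 \subset Q_{r/2}^-(0,0)$. In Case 2, $u$ avoids $\phi_2$ throughout $Q_r$; then $u$ is itself a non-negative parabolic supersolution with $u(0,0) = \sigma(r)$, and Lemma \ref{epsilon_lemma_p} together with the parabolic $L^\epsilon$-to-$L^\infty$ bound applied to the subsolution $\bar u := \max(u, 2\sigma(r))$ (Remark \ref{parabolicmaxminremark}) from \cite{Imbert} gives $u \leq C\sigma(r)$ on an interior past sub-cylinder.

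The main obstacle is geometric: unlike in the elliptic case, Lemma \ref{epsilon_lemma_p} provides only past-time information, so in Case 1 the two measure estimates live on past cylinders of two distinct points, and I must exhibit a common sub-cylinder $\widetilde K \subset K^-(X_1) \cap K^-_0$ of measure comparable to $r^{n+2}$. Because $|x_1| \leq r/8$ and $|t_1| \leq r^2/64$, the sub-cylinder $\widetilde K := B_{r/4}(x_1) \times [\min(0,t_1) - r^2/8, \min(0,t_1) - r^2/16]$ lies inside both past cylinders and has measure of order $r^{n+2}$; on $\widetilde K$ the two measure estimates then sum to strictly less than $|\widetilde K|$ for $M$ large enough, a contradiction, which via the modulus of $\phi_2$ upgrades $u(X_1) \leq M\sigma(r)$ to $u \leq (M+1)\sigma(r)$ on $Q_{r/8}$. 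To extend the resulting past upper bounds in both cases to the full symmetric cylinder $Q_{r/c}(X_0)$, I compare $u$ from above against a parabolic barrier (the solution of $F(D^2 v) - \partial_t v = 0$ with the past estimate as initial data at $t = 0$ and the $\sigma(r)$-scale bound $u \leq \phi_2 \leq \phi_2(x_0) + \sigma(r)$ as boundary data on the sides) propagating the upper bound forward in time. Combining the two cases yields the one-scale estimate $u \leq u(X_0) + C\sigma(r)$ on a definite parabolic neighborhood of $X_0$, and iterating dyadically produces $|u(X) - u(X_0)| \leq A r^\beta$ on $Q_r(X_0)$ for all $0 < r < 1/2$, with $\beta \leq \alpha$ determined by the shrinking ratio between the two scales.
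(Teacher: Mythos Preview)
Your approach is genuinely different from the paper's: you attempt a direct, one-scale quantitative estimate in the spirit of the elliptic Lemma~\ref{growth_at_contact}, whereas the paper proves Lemma~\ref{modulus_lemma_p} by a contradiction/blow-up argument (rescale at the smallest failing scale so that $|u_i|\le 1$ on $Q_1$, then pass to a limit and contradict property~(\ref{item:4})). Your Case~1 is fine, and the past-time half of Case~2 is also correct. The problem is the forward-in-time propagation in Case~2.

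You write that the lateral boundary data for the barrier is ``the $\sigma(r)$-scale bound $u\le\phi_2\le\phi_2(x_0)+\sigma(r)$''. But after your normalization $u\mapsto u-\phi_1(x_0)+\sigma(r)$, the quantity $\phi_2(x_0)$ becomes $\phi_2(x_0)-\phi_1(x_0)+\sigma(r)$, i.e.\ the obstacle gap plus $\sigma(r)$; this is of order one, not of order $\sigma(r)$. A barrier with initial data $C\sigma(r)$ at the bottom and lateral data of order one will, at the spatial center and forward time $t\sim (\mu r)^2$, only give $u\lesssim C\sigma(r)+c_0$ for a fixed small constant $c_0$ (coming from the quadratic barrier $C|x|^2+dt$ with $C\sim 1/r^2$), never $u\le C\sigma(r)$. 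So the ``one-scale estimate $u\le u(X_0)+C\sigma(r)$ on a definite parabolic neighborhood of $X_0$'' does not follow as stated, and the dyadic iteration you describe cannot start.

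This is exactly what the paper's blow-up buys: by choosing $r_i$ as the largest scale at which the $C^\beta$ bound still holds (property~(\ref{item:3}), $|u_i|\le 1$ on $Q_1$), the lateral data for the barrier in Case~2 is $\le 1$, and the quadratic barrier $\psi=C|x|^2+dt$ then gives $v_0\lesssim (\delta'/\delta)^2$ on $Q_{\delta'}^+$, which beats $\delta'^\beta$ for $\beta<2$. Your direct argument can be repaired, but only by recasting it as a genuine induction on scales: assume $|u-u(X_0)|\le Ar^\beta$ on $Q_r(X_0)$ and use \emph{that} (not $u\le\phi_2$) as the lateral data for the barrier when passing to $Q_{\mu r}(X_0)$; the barrier then yields $u\le C\sigma(r)+C'Ar^\beta\mu^2$ on $Q_{\mu r}^+$, and for $\beta<\min(\alpha,2)$ and $\mu$ small this closes the induction. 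That is essentially a de-compactified version of the paper's proof, and is not what you wrote.
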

\begin{proof}
We consider the situation in which $X_0 \in E_1 \cap Q_{1/2}$, the other case is analog. We argue  by contradiction: if this result did not hold we would have two sequences of positive real numbers $\{ A_i \}_{i}$ and $\{r_i\}_i$ satisfying $\lim_{i \rightarrow \infty} A_i=\infty$ and $\lim_{i \rightarrow \infty} r_i=0$  and for each $i\in \mathbb{N}$ we  would have:
\[ 
\left \{
  \begin{matrix}
  \text{Two obstacles $\widehat{\phi^i_1}<\widehat{\phi^i_2}$ on $Q_2$ with modulus of continuity $\sigma$}\\
  \text{ A solution $\widehat{u}_i$ of the problem $(\widehat{\phi^i_1},\widehat{\phi^i_2}, F, Q_2)$ with $\lambda\leq F\leq \Lambda$}  \\ 
  \text{ A point $Z_i \in Q_{1/2}$ so that  $\widehat{\phi^i_1}(Z_i)= \widehat{u}_i(Z_i)$}  \\
  \text{ $|\widehat{u}_i(X)-\widehat{u}_i(Z_i)| \leq A_ir_i^{\beta}$ for $X\in Q_{r_i}(Z_i)$}  \\
  \text{A point $X_i \in Q_{\delta r_i}(Z_i)$ so that  $|\widehat{u}_i(\widehat{X}_i)-\widehat{u}_i(Z_i)| > A_i(\delta r_i)^{\beta}$ }
    \end{matrix}
\right .
\]

Where $\delta,\beta>0$ are to be chosen. We asume that $Z_i=(0,0)$ and define:

\begin{equation}\label{rescaled_solution}
	u_i(X):= \frac{ \widehat{u}_i( r_i x, r_i^2t)}{ A_ir_i^{\beta}} \text{ \: \: for \: $X=(x,t)\in Q_1$ }
\end{equation}

\begin{equation}\label{rescaled_lower_obstacle}
	\phi^i_1(X):= \frac{ \widehat{\phi^i_1}( r_i x, r_i^2t)}{A_ir_i^{\beta}} \text{ \: \: for \: $X=(x,t)\in Q_1$ }
\end{equation}

\begin{equation} \label{rescaled_upper_obstacle}
	\phi^i_2(X):= \frac{ \widehat{\phi^i_2}( r_i x, r_i^2t)}{ A_ir_i^{\beta}} \text{ \: \: for \: $X=(x,t)\in Q_1$ }
\end{equation}

Notice for instance that if we take $\beta<\alpha$ we will get
\begin{equation}\label{phi_oscilation}
|\phi^i_1(x)-\phi^i_1(0)|\leq \frac{\sigma(r_i)}{A_ir_i^{\beta}} = \frac{A r_i^{\alpha -\beta}}{ A_i}\rightarrow 0\text{\: \: on \: \:} Q_1
\end{equation}

Moreover, up to a subsequence we have the following:
\begin{enumerate}
\item $u_i$ solves a parabolic double obstacle problem $(\phi_1,\phi_2,F_i,Q_1)$ for some elliptic operator $\lambda \leq F_i\leq \Lambda$ (See Remark \ref{Fully_non_linear_properties}) \label{item:1}
\item $|| \phi^i_1-\phi^i_1(0)||_{L^\infty(B_1)}$, $|| \phi^i_2-\phi^i_2(0)||_{L^\infty(B_1)} \leq \frac{1}{i}$ (see Equation \ref{phi_oscilation}) \label{item:2}
\item $|u_i|\leq 1$ on $Q_1$ \label{item:3}
\item $u_i(X_i)> \delta ^ \beta$ for some $X_i \in Q_{\delta}$ \label{item:4}
\item $u_i(0)=\phi^i_1(0)=0$ \label{item:5}

\end{enumerate}

One of the following situations will occur: \\ 

\textbf{Case 1:}
There is a subsequence of $\{ u_i\}_i$ such that each $u_i$ touches both obstacles on $Q_{2\delta}$.  

The following reasoning the same as on Lemma \ref{growth_at_contact} and consists on applying twice the parabolic $L^\epsilon$ lemmma to reach a contradiction. 
Let $Y_i:=(y_i, t_i)$ and $\hat{Y_i} := (\hat{y_i}, \hat{t_i})$ in $Q_{2\delta}$ such that $\phi^i_1(Y_i)=u_i(Y_i)$ and $\phi^i_2(Y_i)=u_i(Y_i)$. Notice that $w:=u_i-u_i(Y_i)+\frac{1}{i}\geq 0$ on $Q_1$ so we can apply Lemma \ref{epsilon_lemma_p} and Remark \ref{parabolicmaxminremark} to $w$ at $Y_i$ and get

\begin{equation}\label{temp_epsilon1_p}
|\{ u_i > M \}  \cap  A_1  | \leq \left(\frac{C w(Y_i)}{M}\right)^\epsilon  \leq \left(\frac{C}{iM}\right)^\epsilon
\end{equation}

Where $A_1 := B_{\frac{1}{4}}(y_i) \times (t_i-\frac{1}{4}, t_i-\frac{1}{8})$

Notice also that $u_i\leq u_i(\hat{Y_i})+\frac{1}{i}$ on $Q_1$ so we can apply apply Lemma \ref{epsilon_lemma_p} to $h:= u_i(\hat{Y_i})+\frac{1}{i} -u_i$ (see Remark \ref{parabolicmaxminremark}) at $\hat{Y_i}$ and get

\begin{equation}\label{temp_epsilon2_p}
\begin{split}
|\{  u_i \leq M \}  \cap A_2|&\leq  \left( \frac{C h(\hat{Y_i}) }{ u_i(\hat{Y_i})+\frac{1}{i}-M }\right)^\epsilon\\ 
  & \leq  \left( \frac{C }{i (u_i(\hat{Y_i})+\frac{1}{i}-M)}\right)^\epsilon\\
  &\leq  \left( \frac{C }{i (1-M)}\right)^\epsilon 
\end{split}  
\end{equation}

Where $A_2 := B_{\frac{1}{4}}(\hat{y_i}) \times (\hat{t_i}-\frac{1}{4}, \hat{t_i}-\frac{1}{8})$

Notice that if we pick $\delta>0$ small enough we would have that  $|A_1\cap A_2|\geq c>0$ for some universal constant $c$ and hence
$$0<c\leq|A_1\cap A_2| =  \{  u_i > M \}\cap (A_1  \cap A_2)| +  \{  u_i \leq M \}\cap (A_1  \cap A_2)|$$

and if we pick $0 < M < \frac{\delta ^ \beta}{2}$ on Equation \ref{temp_epsilon1_p} and Equation \ref{temp_epsilon2_p}, use the last equation and pass to the limit when $i\rightarrow \infty$ we get
$$0 < c \leq  \left(\frac{C}{iM}\right)^\epsilon + \left( \frac{C }{i (1-M)}\right)^\epsilon \rightarrow 0$$
which is a contradiction.\\

\textbf{Case 2:} there is a subsequence of $\{ u_i\}_i$ of functions that does not touch the upper obstacle on $Q_{2\delta}$.

In this situation we follow \cite{Shahgholian} to get a contradiction. Define $v_i, \widehat{v}_i: Q_1 \rightarrow \mathbb{R} $ such that

\[ 
\left \{
  \begin{matrix}
  F(D^2 v_i)-\partial_t v_i = 0 \text{\: \: on \: \: } Q_{2\delta} \\
  v_i = u_i \text{ \: \:  on\: \: } \partial_p Q_{2\delta}
  \end{matrix}
\right .
\]

\[ 
\left \{
  \begin{matrix}
  F(D^2 \widehat{v}_i)- \partial_t \widehat{v}_i = 0 \text{\: \: on \: \:} Q_{2\delta} \\
  \widehat{v}_i = \max(u_i,\frac{1}{i}) \text{\: \: on \: \:} \partial_p Q_{2\delta}
  \end{matrix}
\right .
\]
From the comparison principle and as $u_i$ is the smallest super solution above $\phi^i_1$ we actually have  $-\frac{1}{i}\leq v_i \leq u_i \leq \widehat{v}_i$ on $Q_{2\delta}$.
From the interior estimates for fully non linear parabolic equations (see \cite{Wang}) and Arsela-Acoli it follows that $v_i\rightarrow v_0$ uniformly on $Q_{\delta}$.  We also know that $F(D^2v_0)-\partial_t v_0 \leq 0$ on $Q_{\delta}$ (see Remark \ref{Fully_non_linear_properties}) and as $ -\frac{1}{i}\leq v_i(0)\leq u(0)=\phi^i_1(0)=0$ we have $v_0(0)=0$ and hence, as $v_0\geq 0$ we conclude (using Lemma \ref{epsilon_lemma_p}) that $v_0=0$ on $Q^-_{\delta}$ .

Notice also that from the interior estimates for parabolic equations and from Arsela-Ascoli we have $\hat{v}_i\rightarrow \hat{v}_0$ uniformly on $Q_{\delta}$ and as $0\leq (\widehat{v}_i-v_i)\leq \frac{2}{i}$ on $\partial_p Q_{2\delta}$ (from the maximum principle) we know $0\leq (\widehat{v}_i-v_i)\leq \frac{2}{i}$ on $Q_{2\delta}$ and hence $\widehat{v}_0 = u_0 = 0$ on $Q^-_{\delta}$ and, as $u_i\leq \hat{v_i}$ 

At this point we know that $v_0$ is caloric, $v_0\leq 1$ on $Q_{2\delta}$ and $v_0=0$ on $Q_{\delta}$ so we can pick  $C,d,e>0$ properly so that the barrier $\psi:= C|x|^2+d t+e$ on  $Q^+_\delta$ is supercaloric, $\psi \geq 1$ on $\partial B_\delta \times (0,\delta^2)$ and $\phi(0)=0$ and hence if we redefine $\delta, \beta>0$ properly this contradicts the fact that $u_i(X_i)> \delta ^ \beta$ for some $X_i \in Q_\delta$ $\square$.

\end{proof}

We now bring the previous estimates to the interior of $Q_1$.

\begin{theorem}\label{theorem_interior_regularity_parabolic} 
($C^{\alpha}$ regularity)  Let $u$ a solution of the parabolic double obstacle problem ($\phi_1$, $\phi_2$, $F$, $Q^-_1$) with $\phi_1, \phi_2 \in C^\alpha$. Then $u$ is Holder continuous in $Q^-_{1-\delta}$ for any $0<\delta <1$
\end{theorem}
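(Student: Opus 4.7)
The plan is to mirror the elliptic argument (Theorem \ref{Holder_Regularity_u}): combine the Hölder growth away from the contact points (Lemma \ref{modulus_lemma_p}) with parabolic analogs of the auxiliary Lemmas \ref{auxiliar_holder_growth_0} and \ref{auxiliar_holder_growth_1}. The underlying PDE fact that makes this work is that on the non-contact region $A=\{\phi_1<u<\phi_2\}$, $u$ is both a sub- and a supersolution, so $F(D^2u)-\partial_t u=0$ on $A$ in the viscosity sense. Thus inside $A$ we can freely invoke interior estimates for fully nonlinear parabolic equations (Wang), while on $\partial A \cap Q^-_1$ the point belongs to either $E_1$ or $E_2$, where Lemma \ref{modulus_lemma_p} gives us the Hölder modulus $A r^\beta$.

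The first step is the parabolic analog of Lemma \ref{auxiliar_holder_growth_0}: for any open parabolic domain $U\subset Q^-_1$ and any caloric function $h$ on $U$ whose oscillation on $Q_\rho(X_0)\cap U$ from $h(X_0)$ is bounded by $\sigma(\rho)$ for every $X_0\in \partial_p U$, we have $|h(X)-h(X')|\leq\sigma(|X-X'|_{par})$ everywhere on $U$. The proof is identical to the elliptic one: translate $h_Y(X):=h(X+Y)$, observe that both $h$ and $h_Y$ satisfy the same fully nonlinear parabolic equation on $U\cap(U-Y)$, so $h-h_Y$ is trapped between the parabolic Pucci extremal operators $M^\pm(D^2\cdot)-\partial_t\cdot$, and the parabolic comparison principle transfers the boundary bound $\sigma(|Y|)$ to the interior.

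The second step is the parabolic analog of Lemma \ref{auxiliar_holder_growth_1}, applied with $U=A$. For $X_0\in \partial A\cap Q^-_{1-\delta}$ we use Lemma \ref{modulus_lemma_p} to get $\sup_{Q_\rho(X_0)\cap A}|u-u(X_0)|\leq A\rho^\beta$. For $X_0\in A\cap Q^-_{1-\delta}$ at parabolic distance $\geq \delta$ from the free boundary we have $F(D^2 u)-\partial_t u=0$ on a full cylinder $Q_\delta(X_0)$, so the interior $C^{1,\alpha}$ estimate for caloric functions gives $\|\partial u\|_{L^\infty}\leq C\|u\|_{L^\infty}/\delta$. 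Gluing these two estimates by the case split of Lemma \ref{auxiliar_holder_growth_1} (either $X,X'$ are both close to the free boundary, or both live deep in $A$, or one is close and one is not), we bound the oscillation on every small parabolic cylinder by a universal modulus. Applying Step 1 to $u$ on $A$ then propagates this modulus to all of $A\cap Q^-_{1-\delta}$; for $X$ in the contact region the bound is automatic since $u$ agrees with $\phi_1$ or $\phi_2$ there and both obstacles are $C^\alpha$.

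The main obstacle is a bookkeeping one rather than a conceptual one: because the contact sets of $u$ and of a translate $u_Y$ are different, the comparison on $A\cap(A-Y)$ in Step 1 has to be carried out carefully on a domain whose parabolic boundary splits into a piece coming from $\partial A$ (controlled by Lemma \ref{modulus_lemma_p}) and a piece coming from $\partial(A-Y)$ (controlled by the same lemma applied to $u_Y$, using translation invariance of the modulus $Ar^\beta$). Once this is organized, the conclusion of the theorem is immediate, exactly as in Theorem \ref{Holder_Regularity_u}.
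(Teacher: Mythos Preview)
Your proposal is correct and follows essentially the same route as the paper: the paper's own proof is a one-line reference back to Theorem \ref{Holder_Regularity_u}, replacing Lemma \ref{growth_at_contact} by Lemma \ref{modulus_lemma_p} and the elliptic maximum principle / interior estimates by their parabolic counterparts from \cite{Wang}. You have simply unpacked that sentence into the two parabolic auxiliary lemmas (the translation/Pucci comparison step and the interior-vs-boundary gluing step), which is exactly what the paper intends.
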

\begin{proof}
This proof follows exactly as Theorem \ref{Holder_Regularity_u} but using Lemma \ref{modulus_lemma_p} to control the growth of the $u$ near the free boundary, the parabolic maximum principle and the parabolic interior estimates (see \cite{Wang}) instead of its elliptic counterparts $\square$. 
\end{proof}

\begin{lemma} \label{modulus_lemma_derivative_p} 
 Let $u$ a solution of the parabolic double obstacle problem ($\phi_1$, $\phi_2$, $F$, $Q^-_1$), suppose $\partial _e\phi_1$ and $\partial_e \phi_2$ have modulus of continuity $\sigma(r)=Ar^{\alpha}$ on any direction $e$ on space-time, and let $X_0$ a contact point of the lower obstacle (that is $X_0\in E_1 \cap Q^-_{1/2}$) then 

\begin{equation} 
\sup_{Q_{r/2}(x_0)} (u-\phi_1)\leq Cr^{1+\alpha}
\end{equation}

Similarly, if $x_0\in E_2  \cap Q_{1/2}$ then 

\begin{equation}
\sup_{B_{r/2}(x_0)} (\phi_2-u)\leq Cr^{1+\alpha}
\end{equation}

\end{lemma}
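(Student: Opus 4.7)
The plan is to follow the elliptic proof of Lemma \ref{lemma_grow_away_obstacle_derivatives}, replacing each elliptic tool by its parabolic counterpart from this section.

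First I would translate so that $X_0=(0,0)$ and introduce the spatial first-order Taylor expansion $L(x):=\phi_1(0)+x\cdot\nabla\phi_1(0)$. Since the obstacles do not depend on $t$, neither does $L$, so $D^2L\equiv 0$ and $\partial_t L\equiv 0$; hence $L$ satisfies $F(D^2L)-\partial_t L=0$. Consequently $U:=u-L$ solves the parabolic double obstacle problem $(\Phi_1,\Phi_2,F,Q_1)$ with $\Phi_i:=\phi_i-L$, and by Remark \ref{growth_from_tangent} applied spatially,
$$|\Phi_1(x,t)-\Phi_1(0)|=|\phi_1(x)-\phi_1(0)-x\cdot\nabla\phi_1(0)|\leq |x|\sigma(|x|)\leq C|x|^{1+\alpha}.$$
Being time-independent, this is also the parabolic modulus of continuity of $\Phi_1$ at the origin.

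Second, I would observe that since $\phi_2-\phi_1\geq \epsilon_0>0$ and $u$ is H\"older continuous by Theorem \ref{theorem_interior_regularity_parabolic}, the contact sets $E_1$ and $E_2$ lie at a universal positive (parabolic) distance, say $\geq 4r_0$. Thus on $Q_{r_0}(0)$ we have $U<\Phi_2$, so $F(D^2U)-\partial_t U\leq 0$ throughout $Q_{r_0}$ with equality in $\{U>\Phi_1\}$. In other words, on $Q_{r_0}$ the function $U$ faces a single-obstacle-type problem against $\Phi_1$, whose modulus of continuity at the origin is $Cr^{1+\alpha}$.

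What remains is to show $\sup_{Q_{r/2}}U\leq Cr^{1+\alpha}$ for $r\leq r_0$, and this I would establish by a rescaling-compactness argument modeled on Case 2 of Lemma \ref{modulus_lemma_p}. Assuming the bound fails, a dyadic first-failure selection produces scales $r_i\to 0$ and constants $A_i\to\infty$ such that
$$u_i(x,t):=\frac{U(r_i x,r_i^2 t)}{A_i r_i^{1+\alpha}}$$
satisfies $|u_i|\leq 1$ on $Q_1$, $u_i(0)=0$, $u_i(X_i)>\delta^{1+\alpha}$ for some $X_i\in Q_\delta$, while the rescaled lower obstacle $\phi_1^i$ oscillates by at most $C/A_i\to 0$ near the origin (exactly because $\Phi_1$ has modulus $r^{1+\alpha}$, matching the chosen rescaling exponent). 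Taking caloric replacements $v_i$ and $\hat v_i$ with the same boundary data as in Case 2 of Lemma \ref{modulus_lemma_p}, passing to uniform subsequential limits, and applying Lemma \ref{epsilon_lemma_p} forces the limit $v_0$ to vanish identically on $Q^-_\delta$; the super-caloric barrier $\psi=C|x|^2+dt+e$ on $Q^+_\delta$ then rules out $u_i(X_i)>\delta^{1+\alpha}$ once $\delta$ and the constants are tuned as in that lemma. The corresponding estimate near $E_2$ follows by the symmetric argument applied to $\phi_2-u$.

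The main obstacle is checking that the barrier/compactness step of Case 2 of Lemma \ref{modulus_lemma_p} still closes with the sharper exponent $1+\alpha$ in place of the generic H\"older exponent $\beta$ of that earlier lemma. The decisive point is that the normalization factor $A_i r_i^{1+\alpha}$ is chosen to match the modulus of $\Phi_1$ rather than that of $\phi_1$, so the rescaled obstacles still collapse to a constant and the limit problem remains a non-negative super-caloric function pinned at the origin -- precisely the situation in which the barrier argument used in Case 2 operates.
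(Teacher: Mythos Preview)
Your proposal is correct and follows essentially the same route as the paper: subtract the spatial tangent plane $L$ of $\phi_1$ at the contact point, observe that $U=u-L$ solves a double obstacle problem whose lower obstacle $\Phi_1=\phi_1-L$ has modulus $Cr^{1+\alpha}$ at the origin, use the uniform separation and Theorem~\ref{theorem_interior_regularity_parabolic} to reduce locally to a single-obstacle situation, and then invoke the growth argument of Lemma~\ref{modulus_lemma_p}. The paper simply cites Lemma~\ref{modulus_lemma_p} at the last step, whereas you explicitly re-run the Case~2 compactness/barrier argument with the rescaling exponent $1+\alpha$; your version is the more careful one, since it checks that the exponent $1+\alpha<2$ is still compatible with the quadratic barrier $\psi=C|x|^2+dt+e$. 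One cosmetic point: the line ``$L$ satisfies $F(D^2L)-\partial_t L=0$'' is not needed (and would require $F(0)=0$); the relevant fact is simply that $D^2L=0$ and $\partial_tL=0$, so $F(D^2U)-\partial_tU=F(D^2u)-\partial_tu$ pointwise.
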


\begin{proof}
We know from Remark \ref{growth_from_tangent} that if $\partial_e \phi_1$ has modulus of continuity $\sigma$ then
$$ |\phi(B)-\phi(A) - (B-A) \cdot \nabla \phi(A)| \leq  |B-A|\sigma(|B-A|) $$

Let $L_x$ the first order Taylor expansion of $\phi_1$ at $x$, that is

$$ L_x(y):=\phi_1(x)+(y-x)\cdot \nabla \phi_1(x) $$

We now want to show that if $x\in E_1\cap Q_{1/2}$ then

$$ U_x(y):=u(y)-L_x(y)\leq  A|y-x|\sigma(|y-x|) \leq A|y-x|^{1+\alpha}  \text{\:\: for \:\:} y\in Q_{1}(x) $$

For a universal constant $A>0$. 

Notice that $U_x$ solves the parabolic double obstacle problem $(\phi_1-L_x, \phi_2-L_x, F, Q_1^-)$. We point out that as the obstacles are uniformly separated and using the previous theorem we know that locally (i.e. around each point on $E_1 \cap Q^-_{1/2}$) our situation reduces to that of a single obstacle problem  (with obstacle $\phi_1-L_x$), in particular we can apply Lemma \ref{modulus_lemma_p} to $U_x$ and conclude the result $\square$.

\end{proof}

\begin{theorem}\label{C1InteriorRegularityParabolic}
($C^{1, \alpha}$ regularity) Let $u$ a solution of the parabolic double obstacle problem ($\phi_1$, $\phi_2$, $F$, $Q^-_1$) with $\phi_1, \phi_2 \in C^{1,\alpha}(Q^-_1)$ and $e$ any direction in space and time. Then $u_e$ is Holder continuous in $Q^-_{1-\delta}$ for any $0<\delta <1$.
\end{theorem}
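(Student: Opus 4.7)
The plan is to mirror in the parabolic setting the proof of Theorem \ref{theorem_interior_regularity_elliptic}: establish a parabolic version of Lemma \ref{holder_growth_to_interior} giving H\"older continuity of $u_e$, then read off the theorem. All the needed ingredients are already in place: Lemma \ref{modulus_lemma_derivative_p} plays the role of Lemma \ref{lemma_grow_away_obstacle_derivatives}, the parabolic interior estimates of \cite{Wang} replace Corollary 5.7 of \cite{Cabre}, and the parabolic comparison principle replaces its elliptic counterpart.

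The main step is to obtain a H\"older estimate on $u_e$ at contact points. Fix $X_0 \in \Gamma_1 \cap Q^-_{1/2}$ and a spatial direction $e$. By the uniform separation of the obstacles together with Theorem \ref{theorem_interior_regularity_parabolic}, there is a universal $r_0 > 0$ with $u < \phi_2$ on $Q_{r_0}(X_0)$, so locally we are in a single obstacle setting. Given $X \in Q_{r_0}(X_0) \cap A_1$, let $\widehat{X}$ realize the parabolic distance from $X$ to $E_1 \cap \overline{Q_{r_0}(X_0)}$, and let $L$ be the first order Taylor polynomial of $\phi_1$ at $\widehat{X}$. Since $u - L$ is caloric on a parabolic cylinder of radius comparable to $|X - \widehat{X}|$ around $X$, and since Lemma \ref{modulus_lemma_derivative_p} gives $\sup |u - L| \leq C |X-\widehat{X}|^{1+\alpha}$ on that cylinder, a rescaled parabolic interior gradient estimate from \cite{Wang} yields $|u_e(X) - \partial_e L| \leq C |X-\widehat{X}|^{\alpha}$. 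Combined with $\partial_e L = \partial_e \phi_1(\widehat{X}) = u_e(\widehat{X})$ and $|\partial_e \phi_1(\widehat{X}) - \partial_e \phi_1(X_0)| \leq C|\widehat{X}-X_0|^{\alpha}$, this gives $|u_e(X) - u_e(X_0)| \leq C |X-X_0|^\alpha$ in parabolic distance, for every $X$ in a parabolic neighborhood of the contact set.

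With this free boundary control, I would propagate to the interior via the parabolic analogue of Lemma \ref{auxiliar_holder_growth_0}: on the non-contact set $u_e$ is trapped between the parabolic Pucci extremal operators (by differentiating $F(D^2 u) - \partial_t u = 0$), so parabolic comparison applied to translated pairs $u_e(\cdot)$ and $u_e(\cdot + h)$ inside $Q^-_{1-\delta}$, together with the boundary H\"older bound just established on the contact set and on $\partial B_{1-\delta}$ (via rescaled interior estimates as in Lemma \ref{auxiliar_holder_growth_1}), transfers the H\"older modulus into the interior.

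The hardest point will be the time direction in the statement. Spatial translations preserve parabolic cylinders, whereas time translations do not, so the parabolic Pucci comparison must be set up on carefully chosen intersections of translated cylinders; moreover, H\"older continuity of $\partial_t u$ is not a formal consequence of spatial $C^{1,\alpha}$ regularity. One must exploit the $t$-independence of the obstacles, which forces $\partial_t u = 0$ on $E$ and $\partial_t u = F(D^2 u)$ on $A$, in order to glue the two sides across the free boundary at the level of $u_{\partial_t}$. Modulo this technical point, the theorem follows at once from the parabolic version of Lemma \ref{holder_growth_to_interior} sketched above.
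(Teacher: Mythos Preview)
Your proposal is essentially the paper's own proof: the paper states that the argument follows exactly as in Lemma \ref{holder_growth_to_interior}, replacing Lemma \ref{lemma_grow_away_obstacle_derivatives} by Lemma \ref{modulus_lemma_derivative_p}, the elliptic maximum principle by the parabolic one, and the elliptic interior estimates by those of \cite{Wang}, which is precisely the scheme you outline. Your concern about the time direction is legitimate but is not addressed in the paper either; the paper's one-line proof simply invokes the elliptic argument verbatim without isolating the $\partial_t u$ case.
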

\begin{proof}
This proof follows exactly as Theorem \ref{holder_growth_to_interior} but using Lemma \ref{modulus_lemma_derivative_p} to control the growth of $u$ near the free boundary, the parabolic maximum principle and the parabolic interior estimates (see \cite{Wang}) instead of its elliptic counterparts $\square$. 
\end{proof}

\section{Appendix: Existence and regularity of solutions when the obstacles are smooth}

In this section we sketch a proof of existence  of viscosity solutions to the elliptic problem from Section \ref{Section_Elliptic_Problem} in the case in which $\phi_1$ and $\phi_2$ are smooth. \\

Let $\phi_1, \phi_2\in C(\overline{B_r})$ such that $\phi_1 < \phi_2$, $ g \in C(\partial B_r)$  boundary data (with $\phi_1 \leq g \leq \phi_2  $ on $\partial B_r$ ) and an elliptic operator $0<\lambda \leq F\leq \Lambda$ . Our goal is to find \: $u:\overline{B_r} \rightarrow \mathds{R}$ \: that solves the elliptic double obstacle ($\phi_1, \phi_2$, $F$, $B_r$), that is:

\begin{equation} \label{Elliptic_Problem}
\begin{cases}
\begin{split}
  &\text{ u is continuous on } \overline{B}_r  \\
  &\phi_1 \leq u \leq \phi_2 \text{ on } \overline{B}_r \\
  &u=g \text{ on } \partial B_r \\
  &F(D^2 u) \leq 0 \text{ on } \{ u < \phi_2 \} \cap B_r \\
  &F(D^2 u) \geq 0 \text{ on } \{ u > \phi_1 \} \cap B_r
\end{split}
\end{cases}
\end{equation}

To do so we adapt the penalisation method presented in \cite{Friedman} to our situation. A similar approach can also be taken to prove the existence of solutions for the parabolic case. First we study the family of penalised equations

\begin{equation} \label{elliptic_penalized}
\begin{cases}
\begin{split}
F(D^2 u^\epsilon) &= \beta_\epsilon (u^\epsilon - \phi_1) - \beta_\epsilon (\phi_2 - u^\epsilon)  \quad &\textrm{on} \quad B_1 \\
u^\epsilon &= 0  \quad & \textrm{on} \quad \partial B_1
\end{split}
\end{cases}
\end{equation}

Where, for each $\epsilon > 0$,  $\beta _\epsilon:$ $\mathbb{R} \rightarrow \mathbb{R}$ is a smooth function with the following properties:

\begin{itemize}
  \item $\lim_{s\to - \infty}\beta_\epsilon(s)=-\infty$
  \item $\beta_\epsilon ':= \partial \beta_\epsilon > 0$ 
  \item $\beta_\epsilon(0)=-C$
  \item $-C \leq \beta_\epsilon(s)\leq C$ for every $s>0$
  \item $\lim_{\epsilon\to 0}\beta_\epsilon(s)= 0$ for every $s>0$ 
  \item $\lim_{\epsilon\to 0}\beta_\epsilon(s)= -\infty$ for every $s<0$ 
\end{itemize}

Our goal is to show that $u^\epsilon$ converges to the solution of our elliptic problem $u$ when $\epsilon \rightarrow 0$,  first we need some lemmas.

\vspace{5mm}

\begin{lemma} \label{smoothness_penalized_equation} 
The solutions $u^\epsilon$ to the penalised problem (Equation \ref{elliptic_penalized}) are  $C^{2, \alpha}$
\end{lemma}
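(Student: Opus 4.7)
The plan is a standard bootstrap argument: first bound $u^\epsilon$ in $L^\infty$ so that the penalization term on the right-hand side is bounded, then apply interior regularity theory for fully nonlinear elliptic equations with bounded right-hand side to get an initial H\"older modulus, and finally exploit the smoothness of $\beta_\epsilon$ to iterate up to $C^{2,\alpha}$.

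First I would derive an $L^\infty$ bound of the form
\[
\phi_1(x) - C\epsilon \leq u^\epsilon(x) \leq \phi_2(x) + C\epsilon \quad \text{on } B_1,
\]
with $C$ depending only on $\|\phi_1\|_{C^2}$, $\|\phi_2\|_{C^2}$ and the ellipticity constants. The key observation is that $\beta_\epsilon'>0$, so the operator $u\mapsto F(D^2 u)-\beta_\epsilon(u-\phi_1)+\beta_\epsilon(\phi_2-u)$ is proper and obeys a comparison principle. If $u^\epsilon-\phi_1$ attained a very negative minimum at an interior point $x_0$, then $D^2 u^\epsilon(x_0)\geq D^2\phi_1(x_0)$, so by uniform ellipticity $F(D^2 u^\epsilon(x_0))\geq F(D^2\phi_1(x_0))$, which is bounded; meanwhile $\beta_\epsilon(u^\epsilon(x_0)-\phi_1(x_0))\to-\infty$ while $\beta_\epsilon(\phi_2(x_0)-u^\epsilon(x_0))$ stays bounded, a contradiction. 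The bound from above is symmetric. (Existence of a continuous viscosity solution is itself handled by Perron's method, since the monotonicity of the zeroth order term makes the equation proper and provides uniqueness and a comparison principle.)

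With $u^\epsilon$ controlled in $L^\infty$, the right-hand side $f^\epsilon(x):=\beta_\epsilon(u^\epsilon-\phi_1)-\beta_\epsilon(\phi_2-u^\epsilon)$ is bounded in $L^\infty$ (with a bound depending on $\epsilon$, which is acceptable since we only need $C^{2,\alpha}$ for each fixed $\epsilon$). Interior Krylov--Safonov estimates for fully nonlinear uniformly elliptic equations with bounded RHS (as in Chapters 4--5 of \cite{Cabre}) therefore give $u^\epsilon\in C^{1,\alpha}_{\mathrm{loc}}(B_1)$ for some universal $\alpha>0$. Since $\beta_\epsilon$, $\phi_1$ and $\phi_2$ are smooth, the composition then yields $f^\epsilon\in C^{\alpha}_{\mathrm{loc}}(B_1)$.

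At this stage we feed the H\"older RHS back into the equation. Under the standard concavity/convexity assumption on $F$ needed in this context, the Evans--Krylov theorem for $F(D^2 u)=f(x)$ with $f\in C^\alpha$ upgrades $u^\epsilon$ to $C^{2,\alpha}_{\mathrm{loc}}(B_1)$; a parallel argument using boundary Schauder-type estimates and the smoothness of $\partial B_1$ (together with the smoothness of the boundary datum, here $0$) extends the regularity up to $\overline{B}_1$. The main technical obstacle is precisely this last step, since without a concavity or convexity hypothesis on $F$ one generally has only $C^{1,\alpha}$ regularity from fully nonlinear theory; it is standard in penalisation arguments of this type to either impose the concavity assumption or else record only $C^{1,\alpha}$ regularity of the $u^\epsilon$, which is still enough to pass to the limit and recover the viscosity solution of the original double obstacle problem.
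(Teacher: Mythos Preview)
Your argument is essentially correct and the bootstrap is the right idea, but you and the paper take genuinely different routes to reach it. The paper does not appeal to Perron's method or to a comparison principle for the proper semilinear equation; instead it treats the lemma as an \emph{existence} statement and builds $u^\epsilon$ constructively. Concretely, it truncates $\beta_\epsilon$ to a bounded function $\beta_\epsilon^N$, sets up the map $T:W^{2,p}\to W^{2,p}$ that freezes the zeroth-order term and solves $F(D^2 v)=\beta_\epsilon^N(w-\phi_1)-\beta_\epsilon^N(\phi_2-w)$, and obtains a fixed point $u^N$ by Schauder's fixed point theorem together with the $W^{2,p}$ estimates of \cite{Cabre}. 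Sobolev embedding then gives $u^N\in C^{1,\alpha}$, so the right-hand side is $C^\alpha$ and the Schauder-type estimates for fully nonlinear equations upgrade $u^N$ to $C^{2,\alpha}$ --- the same final step you use. The paper then proves, by a case analysis at the extremum of the right-hand side, that $\beta_\epsilon^N(u^N-\phi_1)-\beta_\epsilon^N(\phi_2-u^N)$ is bounded independently of $N$; for $N$ large the truncation is therefore inactive and $u^\epsilon:=u^N$ solves the untruncated penalised equation. Your approach is cleaner if one is willing to take existence from Perron and work in the viscosity framework from the start; the paper's truncation-plus-fixed-point argument is more self-contained and yields a classical solution directly without invoking viscosity comparison for the semilinear problem. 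Both proofs share the same implicit dependence on a concavity/convexity hypothesis on $F$ at the Evans--Krylov/Schauder step, which you correctly flag and the paper leaves tacit.
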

\begin{proof}
Fix $\epsilon>0$, for each $N\in \mathbb{N}$ let $\beta_\epsilon ^N$ a truncation of $\beta _\epsilon$ between $N$ and $-N$ (i.e. $\beta_\epsilon ^ N := \max \{\min\{ \beta_\epsilon, N \}, -N \}$), regularize $\beta_\epsilon ^ N$ if necessary (i.e. change it a little bit so that it couples smoothly at levels $N$ and $-N$ and $\partial \beta_\epsilon ^N > 0$ still holds). \\

We define the mapping $T:W^{2,p}(B_1)\rightarrow W^{2,p}(B_1)$ with $T(w)=v$ where $v$ is the solution to the problem

\begin{equation} \label{penalized_equation_truncated}
\begin{cases}
\begin{split}
F(D^2v) &= \beta_\epsilon ^N (w - \phi_1) - \beta_\epsilon ^ N (\phi_2 - w)  & \textrm{on} &\quad B_1 \\
v &= g  & \quad \textrm{on} & \quad \partial B_1
\end{split}
\end{cases}
\end{equation}

This map is well defined since the right hand side of Equation \ref{penalized_equation_truncated} is in $L^p$ from our definition of $\beta_\epsilon^N$.\\

From the $W^{2,p}$ estimates for fully non linear equations (see \cite{Cabre}) we can find a radious $R>0$ so that $T(B_{R})\subset B_{R}$ and from Schauders fixed point theorem we get that there is an element $u^N \in W^{2,p}$ such that $T(u^N)=u^N$, that is:

\begin{equation} \label{penalized_equation_truncated}
\begin{cases}
\begin{split}
Lu^N &= - \beta_\epsilon ^N (u^N - \phi_1) + \beta_\epsilon ^ N (\phi_2 - u^N) & \textrm{on} &\quad B_1 \\
u^N &= g  & \quad \textrm{on} &\quad \partial B_1 \\
\end{split}
\end{cases}
\end{equation}

From the Sobolev embedding $W^{2,p} \hookrightarrow C^{1,\alpha}$ we get that the right hand side of Equation \ref{penalized_equation_truncated} is $C^\alpha$ and hence, from the Schauder estimates for fully non linear equations we have that $u^N\in C^{2+\alpha}$, that is $u^N$ has classic derivatives.

Notice at this point that, as $\epsilon > 0$ is fixed,  the right hand side of Equation \ref{penalized_equation_truncated} is bounded by a constant depending on $N$, we want to improve our bound so we dont have this dependency. \\

Lets first bound $\eta:=\beta_\epsilon ^N (u^N - \phi_1) - \beta_\epsilon ^ N(\phi_2 - u^N)$ from below on $B_1$. Suppose then that $\eta$ attains its minimum at $x_0 \in B_1$, there are 3 cases:

Case 1: $\phi_1(x_0)  \leq u^N(x_0)       \leq \phi_2(x_0)$ notice that $u^N(x_0) - \phi_1(x_0), \phi_2(x_0) - u^N(x_0)\geq 0$ and hence from the definition of $\beta_\epsilon ^N$ we have that $-C \leq |\beta_\epsilon^N (u^N(x_0) - \phi_1(x_0))|, |\beta_\epsilon ^ N(\phi_2(x_0) - u^N(x_0))| \leq C$ and hence $-2C \leq \eta$ on $\overline{Q}$ with $C$ as in the definition of $\beta_\epsilon$. \\

Case 2: If $\phi_1(x_0)  \leq \phi_2(x_0)  \leq u^N(x_0)$  we have $u^N(x_0) - \phi_1(x_0)\geq 0$ and $\phi_2(x_0) - u^N(x_0)\leq 0$, so from the properties of $\beta_\epsilon ^N$ we get $-C\leq \beta_\epsilon ^N (u^N(x_0) - \phi_1(x_0))$ and  $\beta_\epsilon ^N(\phi_2(x_0) - u^N(x_0))\leq 0$ and hence $-C \leq \eta$ on $\overline{B_1}$, again, with $C$ as in the definition of $\beta_\epsilon$. \\

Case 3: $u^N(x_0) \leq \phi_1(x_0)  \leq \phi_2(x_0)$. In this case notice that $u^N(x_0) - \phi_1(x_0)\leq 0$ and $\phi_2(x_0) - u^N(x_0)\geq 0$, so let $\overline{x}_0$ the point in which $\beta_\epsilon ^N(u^N-\phi_1)$ reaches its negative minimum, so $-C \leq \beta_\epsilon(\phi_2(\overline{x}_0) - u^N(\overline{x}_0)) \leq C$. As $\beta_\epsilon ^N$ is monotone,  $u^N-\phi_1$ attains its negative minimum at $\overline{x}_0$, and we have, 

\begin{equation} \label{last_temp_1}
 \begin{split}
  \beta_\epsilon ^N (u^N(\overline{x}_0) - \phi_1(\overline{x}_0))  
  	&=  F(D^2 u^N(\overline{x}_0)) + \beta_\epsilon ^N(\phi_2(\overline{x}_0) - u^N(\overline{x}_0))  \\
	&=  F(D^2 u^N(\overline{x}_0)) - F(D^2 \phi_1(\overline{x}_0)) +  F(D^2 \phi_1(\overline{x}_0)) + \beta_\epsilon ^N(\phi_2(\overline{x}_0) - u^N(\overline{x}_0))  \\
  &\geq  M^-(D^2(u^N-\phi_1)(\overline{x_0})) + F(D^2 \phi_1(\overline{x}_0))- C
\end{split} 
\end{equation}

On the last inequality we are using the definition of Pucci operators (see \cite{Cabre} or Equation \ref{Pucci_Equation}). Now from Equation \ref{last_temp_1}, and from our choice of $\overline{x_0}$ we get 

 \begin{equation}
 \beta_\epsilon ^N (u^N(\overline{x}_0) - \phi_1(\overline{x}_0))\geq C
 \end{equation}

And $C$ does not depend on $\epsilon$ or $N$. Now, notice that

 \begin{equation}
 	\beta_\epsilon ^N (u ^N(\overline{x}_0) - \phi_1(\overline{x}_0)) - \beta_\epsilon(\phi_2(x_0) - u^N(x_0)) \leq \eta(x_0)
\end{equation}

 and hence we can also bound $\eta$ from below as desired, so we are done with the this case. \\

We conclude then then that $\eta$ is bounded on $B_1$ from below by a constant not dependind on $\epsilon$ or $N$, we can proceed in an analog way to get a bound from above and hence we conclude that $\rVert F(D^2u^N)\lVert_{W^{2,p}}\leq C$ where $C$ is a constant not depending on $\epsilon$ or $N$ and hence $u^N$ is uniformly bounded by a constant not depending on $N$, so if we take $N$ sufficiently big we will have that $u^\epsilon := u^N$ is a smooth solution to our penalized equation (Equation \ref{elliptic_penalized}) $\square$.
\end{proof}

The following is just a slight improvement of the previous lemma, and its proof is indeed very similar.

\begin{lemma} \label{elliptic_beta_bound}
$|\beta_\epsilon (u^{\epsilon} - \phi_1) - \beta_\epsilon (\phi_2 - u^\epsilon)| \leq C$ on $\bar{B_1}$ for every $\epsilon>0$ where $C$ is a positive constant depending only on the ellipticity and the obstacles $\phi_1$ and $\phi_2$.
\end{lemma}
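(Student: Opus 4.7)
The approach is to bound $\eta := \beta_\epsilon(u^\epsilon - \phi_1) - \beta_\epsilon(\phi_2 - u^\epsilon)$ from above and below by constants depending only on the ellipticity of $F$ and on $||F(D^2\phi_1)||_{L^\infty(B_1)}$, $||F(D^2\phi_2)||_{L^\infty(B_1)}$. This is essentially a distillation of Case 3 of Lemma \ref{smoothness_penalized_equation}, now applied to the smooth solution $u^\epsilon$ produced by that lemma. The key mechanism is the monotonicity of $\beta_\epsilon$: a bound on $\beta_\epsilon(u^\epsilon-\phi_i)$ at a global extremum of $u^\epsilon - \phi_i$ automatically propagates to a pointwise bound on all of $\overline{B_1}$.

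For the upper bound, first note that monotonicity of $\beta_\epsilon$ together with $\beta_\epsilon(0) = -C$ and $\beta_\epsilon(s) \leq C$ for $s > 0$ gives $\beta_\epsilon \leq C$ on all of $\mathbb{R}$, hence $\beta_\epsilon(u^\epsilon - \phi_1) \leq C$ on $\overline{B_1}$. To bound $-\beta_\epsilon(\phi_2 - u^\epsilon)$ from above, either $u^\epsilon \leq \phi_2$ on $\overline{B_1}$ and there is nothing to prove, or the maximum of $u^\epsilon - \phi_2$ is attained at an interior point $\bar{x}_0 \in B_1$ (since $g \leq \phi_2$ on $\partial B_1$). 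At such an interior max, $D^2 u^\epsilon(\bar{x}_0) \leq D^2 \phi_2(\bar{x}_0)$, and the uniform ellipticity of $F$ (which gives monotonicity of $F$ in the matrix order) implies $F(D^2 u^\epsilon(\bar{x}_0)) \leq F(D^2\phi_2(\bar{x}_0)) \leq ||F(D^2\phi_2)||_{L^\infty(B_1)}$. Because $u^\epsilon(\bar{x}_0) > \phi_2(\bar{x}_0) > \phi_1(\bar{x}_0)$ also forces $|\beta_\epsilon(u^\epsilon - \phi_1)(\bar{x}_0)| \leq C$, the penalised equation yields $-\beta_\epsilon(\phi_2 - u^\epsilon)(\bar{x}_0) \leq ||F(D^2\phi_2)||_{L^\infty} + C$. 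Since $\bar{x}_0$ minimises $\phi_2 - u^\epsilon$ on $\overline{B_1}$, monotonicity of $\beta_\epsilon$ extends this bound to every point of $\overline{B_1}$.

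The lower bound is symmetric: $\beta_\epsilon \leq C$ immediately gives $-\beta_\epsilon(\phi_2 - u^\epsilon) \geq -C$. If $u^\epsilon \geq \phi_1$ on $\overline{B_1}$ then $\beta_\epsilon(u^\epsilon - \phi_1) \geq -C$ and we are done; otherwise $u^\epsilon - \phi_1$ attains its minimum at an interior point $\bar{x}_0' \in B_1$ (since $g \geq \phi_1$ on $\partial B_1$), where $D^2 u^\epsilon(\bar{x}_0') \geq D^2 \phi_1(\bar{x}_0')$ forces $F(D^2 u^\epsilon(\bar{x}_0')) \geq -||F(D^2\phi_1)||_{L^\infty(B_1)}$, the uniform separation $\phi_2 > \phi_1$ forces $|\beta_\epsilon(\phi_2 - u^\epsilon)(\bar{x}_0')| \leq C$, and the equation combined with monotonicity propagates the bound $\beta_\epsilon(u^\epsilon - \phi_1) \geq -||F(D^2\phi_1)||_{L^\infty} - C$ to all of $\overline{B_1}$. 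The only subtle step is to work with the extrema of $u^\epsilon - \phi_i$ rather than of $\eta$, so that the ellipticity comparison against the smooth obstacle is available; the smoothness assumption from the appendix setup enters only through the finiteness of $||F(D^2\phi_i)||_{L^\infty}$, and no properties of $\beta_\epsilon$ beyond monotonicity and the listed quantitative bounds are used.
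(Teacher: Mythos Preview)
Your proof is correct and follows essentially the same route as the paper: bound each term of $\eta$ separately, using that the extremum of $u^\epsilon-\phi_i$ is interior (by the boundary compatibility of $g$), that ellipticity yields $F(D^2u^\epsilon)\lessgtr F(D^2\phi_i)$ at such an extremum, and that monotonicity of $\beta_\epsilon$ then propagates the resulting bound globally. The paper opens with a case analysis on where $G=\eta$ attains its maximum before passing (in Cases~3 and~4) to the extremum of the individual term, whereas you go there directly---a minor organizational difference only.
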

\begin{proof}

Let $v:=u_\epsilon$, define $G:=\beta_\epsilon (v - \phi_1) - \beta_\epsilon (\phi_2 - v)$ \: (where $G:\bar{B_1}\rightarrow \mathbb{R}$) \: and let $\bar{x}\in \bar{B_1}$ a point in wich $G$ achieves its maximum. 

We show that $G$ is bounded by above, observe the following:

\textbf{Case 1:} If $\bar{x} \in \partial B_1 $. Then, from the compatibility condition at the border we get $v(\bar{x})-\phi_1(\bar{x})\geq 0$  and $\phi_2(\bar{x})-v(\bar{x})\geq 0$ so from the definition of $\beta_\epsilon$ it follows immediately that $G(\bar{x})\leq 2C$ \\

\textbf{Case 2:} If $\bar{x} \in  B_1 $ and $\phi_1(\bar{x}) < v(\bar{x}) < \phi_2(\bar{x})$, it will follow immediate from the definition of $\beta_\epsilon$ that $G\leq 2C$   \\

\textbf{Case 3:} If $\bar{x} \in  B_1 $ and $\phi_2(\bar{x}) \leq v(\bar{x})$. As $\phi_1 \leq \phi_2$ we know that $(v (\bar{x} )- \phi_1 ( \bar{x}))\geq 0$ and hence $\beta_\epsilon(v (\bar{x} )- \phi_1 ( \bar{x}))\geq 0$. Let $\bar{\bar{x}} \in \bar{B_1}$ be the positive maximum of $- \beta_\epsilon (\phi_2 - v)$. If $\bar{\bar{x}}\in \partial B_1$ it follows immediately as before that $G\leq 2C$, so lets assume $\bar{\bar{x}}\in B_1$. From the monotonicity of $\beta_\epsilon$ we know $\bar{\bar{x}}$ is a minimum of $\phi_2 - v$ so the matrix $D^2(\phi_2 - v)(\bar{\bar{x}})$ is symmetric and non-negative. From the ellipticity of $F$ (Equation \ref{ellipticity_definition} with $M= D^2 v(\bar{\bar{x}})$ and $N=D^2(\phi_2 - v)(\bar{\bar{x}})\geq 0$) we have at $\bar{\bar{x}}$: 

\begin{equation}\label{temporal_case3_penalization_elliptic}
0 \leq \lambda || D^2(\phi_2 - v)|| \leq F(D^2 v+D^2(\phi_2 - v)) - F(D^2 v) = F(D^2\phi_2) - F(D^2 v) 
\end{equation}

So from Equation \ref{elliptic_penalized} and Equation \ref{temporal_case3_penalization_elliptic} we get

$$ F(D^2 v) = \beta_\epsilon (v - \phi_1) - \beta_\epsilon (\phi_2 - v) \leq F(D^2\phi_2) \text{\:\: at \:\:} \bar{\bar{x}}$$

that is

\begin{equation}\label{temporal_case3_penalization_elliptic_2}
- \beta_\epsilon (\phi_2 - v)_{\bar{\bar{x}}} \leq F(D^2\phi_2)_{\bar{\bar{x}}} - \beta_\epsilon (v - \phi_1)_{\bar{\bar{x}}}
\end{equation} 
 
and as $- \beta_\epsilon (\phi_2 - v)_{\bar{\bar{x}}}\geq 0$, $\beta_\epsilon$ is monotone and $\phi_1(\bar{\bar{x}})\leq\phi_2(\bar{\bar{x}})\leq v(\bar{\bar{x}})$ it follows that

\begin{equation}\label{temporal_case3_penalization_elliptic_3}
- \beta_\epsilon (\phi_2 - v)_{\bar{x}}\leq - \beta_\epsilon (\phi_2 - v)_{\bar{\bar{x}}} \leq F(D^2\phi_2)_{\bar{\bar{x}}} - C
\end{equation} 

This implies $G\leq C$ on $\bar{B_1}$ as desired, where $C$ is a constant not depending on $\epsilon$. Notice that if the maximum of the expression $- \beta_\epsilon (\phi_2 - v)$ is negative, the inequality $G\leq C$ on $\bar{B_1}$ is immediate.\\

\textbf{Case 4:} If $\bar{x} \in B_1 $ and $\phi_1(\bar{x}) \geq v(\bar{x})$. In this case, it is immeadiate that  $- \beta_\epsilon (\phi_2 - v)\leq C$ on $\bar{B_1}$, so let $\bar{\bar{x}}$ the maximum of $\beta_\epsilon (v - \phi_1)$. If this maximum is negative, there is nothing to prove, so lets suppose $\beta_\epsilon (v(\bar{\bar{x}}) - \phi_1(\bar{\bar{x}}))\geq 0$ and hence $v(\bar{\bar{x}}) \geq \phi_1(\bar{\bar{x}})$ so $D^2(v-\phi_1)_{\bar{\bar{x}}}\geq 0$ and we can use the ellipticity of $F$ as in the previous case to get

\begin{equation}\label{temporal_case4_penalization_elliptic}
0 \leq \lambda || - D^2(v-\phi_1)|| \leq  F(D^2\phi_1) - F(D^2 v)  \text{\:\:at \:\:} \bar{\bar{x}} 
\end{equation}

So

$$ F(D^2 v) = \beta_\epsilon (v - \phi_1) - \beta_\epsilon (\phi_2 - v) \leq F(D^2\phi_1) \text{\:\: at \:\:} \bar{\bar{x}}$$

And hence as $\beta_\epsilon$ is monotone and bounded by above we get

$$ \beta_\epsilon(v-\phi_1)_{\bar{x}} \leq  \beta_\epsilon(v-\phi_1)_{\bar{\bar{x}}}\leq F(D^2\phi_1)_{\bar{x}} + C$$
So $G$ is bounded by above in this case also.

Finding a bound of $G$ by below is similar to what we just did $\square$.
\end{proof}

\begin{theorem}
 If $\phi_1, \phi_2$ are smooth then the problem \ref{Elliptic_Problem} has a solution $u$ in the viscosity sense. Moreover $u\in C^{1,\alpha}$ for any $0<\alpha<1$
\end{theorem}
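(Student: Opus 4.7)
The approach is to pass to the limit $\epsilon\to 0$ in the penalized equation (Equation \ref{elliptic_penalized}): the smooth solutions $u^\epsilon$ given by Lemma \ref{smoothness_penalized_equation} will converge, along a subsequence, to a viscosity solution $u$ of the elliptic double obstacle problem. The regularity conclusion then follows from Theorem \ref{theorem_interior_regularity_elliptic} applied with the smooth obstacles.

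The key input is the uniform bound of Lemma \ref{elliptic_beta_bound} on the right hand side of Equation \ref{elliptic_penalized}. Combining this with the interior $C^{1,\alpha}$ estimate for fully non linear elliptic equations with bounded right hand side (see \cite{Cabre}) yields uniform bounds $\|u^\epsilon\|_{C^{1,\alpha}(B_{1-\delta})}\leq C(\delta)$ for some universal $\alpha\in(0,1)$. By Arzela--Ascoli and a diagonal argument, a subsequence $u^{\epsilon_k}$ converges in $C^1_{\mathrm{loc}}(B_1)$ to a limit $u$ that inherits the same $C^{1,\alpha}$ bound on every $B_{1-\delta}$; the boundary condition $u=g$ on $\partial B_1$ is recovered from uniform convergence together with standard elliptic barriers.

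Once we have the limit $u$, two things remain. First, the obstacle constraints $\phi_1\leq u\leq\phi_2$ hold by contradiction: if $u(x_0)<\phi_1(x_0)-\eta$ on a small ball $B_\rho(x_0)$, then for $k$ large $u^{\epsilon_k}-\phi_1<-\eta/2$ on $B_\rho(x_0)$, so $\beta_{\epsilon_k}(u^{\epsilon_k}-\phi_1)\to-\infty$ uniformly there, while $\beta_{\epsilon_k}(\phi_2-u^{\epsilon_k})$ stays bounded because $\phi_2-u^{\epsilon_k}$ remains uniformly positive by the separation of the obstacles, contradicting Lemma \ref{elliptic_beta_bound}; the inequality $u\leq\phi_2$ is symmetric. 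Second, one passes to the viscosity limit in the PDE. Given a smooth $\varphi$ touching $u$ from below at $x_0\in\{u<\phi_2\}$, a standard quadratic perturbation makes the touch strict, and uniform convergence produces interior minima $x_{\epsilon_k}\to x_0$ of $u^{\epsilon_k}-\varphi$; the ellipticity of $F$ gives
\[ F(D^2\varphi(x_{\epsilon_k}))\leq F(D^2 u^{\epsilon_k}(x_{\epsilon_k}))=\beta_{\epsilon_k}(u^{\epsilon_k}-\phi_1)(x_{\epsilon_k})-\beta_{\epsilon_k}(\phi_2-u^{\epsilon_k})(x_{\epsilon_k}). \]
Since $\phi_2(x_0)-u(x_0)>0$ the second term tends to $0$; the first has non-positive limsup, either by $\beta_\epsilon(s)\to 0$ for fixed $s>0$ when $u(x_0)>\phi_1(x_0)$, or, when $u(x_0)=\phi_1(x_0)$, by combining monotonicity of $\beta_\epsilon$ with $\beta_\epsilon(0)=-C$ and the fact that the strict perturbation forces $u^{\epsilon_k}(x_{\epsilon_k})\leq\phi_1(x_{\epsilon_k})$ in the limit. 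Hence $F(D^2\varphi(x_0))\leq 0$, and the symmetric argument (test from above with $\beta_{\epsilon_k}(\phi_2-u^{\epsilon_k})\to 0$) yields $F(D^2 u)\geq 0$ on $\{u>\phi_1\}$.

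The main technical difficulty I anticipate is precisely this viscosity passage at the contact case $u(x_0)=\phi_1(x_0)<\phi_2(x_0)$, where the sign of $\beta_{\epsilon_k}(u^{\epsilon_k}-\phi_1)(x_{\epsilon_k})$ in the limit must be controlled carefully; all other steps are standard penalisation manipulations. Once $u$ is identified as a viscosity solution, Theorem \ref{theorem_interior_regularity_elliptic}, applied with $\phi_1,\phi_2\in C^{1,\alpha}$ for every $\alpha<1$ (since they are smooth), delivers the claimed interior $C^{1,\alpha}$ regularity for every $0<\alpha<1$.
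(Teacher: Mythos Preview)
Your proposal follows the same penalisation-and-limit strategy as the paper, and the verification of the obstacle constraints and of the viscosity inequalities is essentially the same (though you spell out the test-function argument more carefully than the paper does). The one genuine difference is in how the compactness and the final $C^{1,\alpha}$ regularity are obtained. The paper applies the $W^{2,p}$ estimates for fully nonlinear equations (with the right hand side bounded by Lemma \ref{elliptic_beta_bound}) to get $u^\epsilon$ uniformly bounded in $W^{2,p}(B_1)$ for arbitrary $p>n$; weak $W^{2,p}$ compactness plus the Sobolev embedding $W^{2,p}\hookrightarrow C^{1,\alpha}$ then give both the strong $C^{1,\alpha}$ convergence and the conclusion $u\in C^{1,\alpha}$ for every $\alpha<1$ in one stroke, without invoking Theorem \ref{theorem_interior_regularity_elliptic}. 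Your route---interior $C^{1,\alpha}$ estimates for a single universal $\alpha$, Arzel\`a--Ascoli, and then Theorem \ref{theorem_interior_regularity_elliptic} to upgrade to every $\alpha<1$---also works, but is slightly less direct and makes the appendix depend on the main regularity theorem rather than standing on its own.
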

\begin{proof}
We fix $p>n$ arbitrary. From Lemma \ref{elliptic_beta_bound} and the $W^{2,p}$ estimates for fully non linear elliptic equations (see \cite{Cabre}) and compactness we have that up to a subsequence there exists $\widehat{u}$ so that $u_\epsilon \rightharpoonup \widehat{u}$ on $W^{2,p}(B_1)$ and from the Sobolev embeddings we have $u_\epsilon \rightarrow \widehat{u}$ on $C^{1,\alpha}(B_1)$ (strongly) where $0<\alpha=\alpha(p,n)<1$ coming from the Sobolev embedding.

From uniform convergence it follows that $\phi_1\leq \widehat{u}\leq \phi_2$, to see this we proceed by contradition. Suppose that $u(x)-\phi_1(x)=-\delta$ for some $x\in B_1$ and $\delta>0$. Then for $\epsilon>0$ sufficiently small we have $u_\epsilon(x)-\phi_1(x)<-\frac{\delta}{2}$ but then from the properties of $\beta_\epsilon$ we get 

$$ \lim_{\epsilon \rightarrow 0} \beta_\epsilon(u_\epsilon(x)-\phi_1(x)) \leq  \lim_{\epsilon \rightarrow 0} \beta_\epsilon(-\frac{\delta}{2})= \infty$$

And this contradicts Lemma \ref{elliptic_beta_bound}. If we had $\widehat{u}(x)>\phi_2(x)$ for some $x\in B_1$ we get to a similar contradiction.

We are only left to show that $\widehat{u}$ is indeed a solution to our problem. Let $x\in B_1$ such that $\phi_1(x)<\widehat{u}(x)<\phi_2(x)$, then, as $\widehat{u}\in C^\alpha$ we know that $\phi_1 <\widehat{u}< \phi_2$ on $B_\delta(x)$ for $\delta$ sufficiently small, moreover from uniform convergence we get  $\phi_1<u_\epsilon < \phi_2$ on $B_\delta(x)$ when $\epsilon>0$ is sufficiently small by redefining $\delta$. And hence, as $\lim_{\epsilon\to 0}\beta_\epsilon(s)= 0$ when $s>0$ we get $F(D^2 \widehat{u}(x))=0$.

If $\phi_1(x)=\widehat{u}(x)$ we have (as $\phi_1<\phi_2$) that $\lim_{\epsilon\rightarrow 0} \beta_\epsilon(\phi_2 - u_\epsilon) =0$ and hence $F(D^2 \widehat{u}(x))\leq 0$. The situation when $\widehat{u}(x)=\phi_2(x)$ follow in the same way $\square$.

\end{proof}

\section{Acknowledgements}
I would like to thank professor Luis Caffarelli for his valuable advice, support, for his enormous patience and for the many meetings in which we discussed material directly or indirecly related with this paper. I would also like to thank Dennis Kriventsov, Pablo Stinga (Iowa State University), Hui Yu (The University of Texas at Austin), Hernan Vivas (The University of Texas at Austin) and Xavier Ros-Oton (The University of Texas at Austin) for their support and the many discussions we had.  

Finally I would like to thank Colciencias and the National Science Fund for supporting this project.


\begin{thebibliography}{1}


    \bibitem{Kinderlehrer} L. Caffarelli, D. Kinderlehrer {\em Potential methods in variation inequalities} Journal d'analyse mathematique, Vol. 37.1980.


  \bibitem{Caffarelli} L. Caffarelli, {\em Obstacle Revisited} The Journal of Fourier Analysis and Applications (1998) 4: 383. doi:10.1007/BF02498216.  

  \bibitem{Cabre} L. Caffarelli, X. Cabre {\em Fully Nonlinear Elliptic Equations}  American Mathematical Society. 1995.
  
\bibitem{DalMaso} G. Dal Maso, U. Mosco, M. Vivaldi {\em A pointwise regularity theory for the two-obstacle problem}  1991.

  \bibitem{Evans} L. Evans {\em Partial Differential equations}. American Mathematical Society; 2 edition. 2010.  
    
  \bibitem{Friedman} A. Friedman {\em Variational Principles and Free-Boundary Problems }.Dover Books on Mathematics. 1982.
    
\bibitem{Imbert} C. Imbert, L. Silvestre {\em Introduction to fully non linear parabolic equations}. An Introduction to the Kähler-Ricci Flow Volume 2086 of the series Lecture Notes in Mathematics. 2013.
    
  
  \bibitem{Ziemer} T. Kilpelainen, W. Ziemer {\em Pointwise regularity of solutions to nonlinear double obstacle problems} Arkiv fü matematik, Volume 29, Issue 1, pp.83-106.  1991.

  \bibitem{Petrosyan} A. Petrosyan, H. Shahgholian {\em Parabolic obstacle problems applied to finance: a free-boundary-regularity approach}. Recent Developments in Nonlinear Partial Differential Equations, Contemporary Mathematics 439. 2007.
  
\bibitem{Shahgholian} H. Shahgholian {\em Free boundary regularity close to inital state for parabolic obstacle problem}. Transactions of the american mathematical society. 2007.

\bibitem{Wang} L. Wang {\em On the regularity theory of fully nonlinear parabolic equations}. Bulletin of the American Mathematical Society. 1990. 
\end{thebibliography}
\end{document}